\begin{document}
\def\ni{\noindent}
\def\t{\theta}
\def\O{\Omega}
\def\e{\epsilon}
\def\lra{\longrightarrow}
\def\RR{{\mathbb R}}
\def\NR{{\mathbb N}}
\def\ZZ{{\mathbb Z}}
\def\CC{{\mathbb C}}
\def\l{\lambda}
\def\LL{${\cal L}$}
\def\E{{\cal E}}
\def\a{{\alpha}}
\def\A{A_{\a}}
\def\ta{\t^{\a}}
\def\rot{{\rm rot}}

\def\stretchx{\Bumpeq{\!\!\!\!\!\!\!\!{\longrightarrow}}}

\newcommand{\vs}[1]{\vskip #1pt}

\newtheorem{Theorem}{Theorem}[section]
\newtheorem{Definition}[Theorem]{Definition}
\newtheorem{corollary}[Theorem]{Corollary}
\newtheorem{proposition}[Theorem]{Proposition}
\newtheorem{examples}[Theorem]{Esempi}
\newtheorem{example}[Theorem]{Example}
\newtheorem{lemma}[Theorem]{Lemma}
\newtheorem{remark}[Theorem]{Remark}

\catcode`\@=11


   \renewcommand{\theequation}{\thesection.\arabic{equation}}
   \renewcommand{\section}%
   {\setcounter{equation}{0}\@startsection {section}{1}{\z@}{-3.5ex
plus -1ex
    minus -.2ex}{2.3ex plus .2ex}{\Large\bf}}

\title{Asymptotic and chaotic solutions of a singularly perturbed Nagumo-type equation\thanks{Under the auspices of GNAMPA-I.N.d.A.M., Italy.
The second author is supported by the P.R.I.N. Project ''Variational and perturbative aspects of nonlinear diffrential problems''.
}}
\author{Alberto Boscaggin\footnote{Dipartimento di Matematica, Universit\`a di Torino, Via Carlo Alberto 10, 10123 Torino, Italy.
e-mail: \texttt{alberto.boscaggin@unito.it}}
\and
Walter Dambrosio\footnote{Dipartimento di Matematica, Universit\`a di Torino, Via Carlo Alberto 10, 10123 Torino, Italy.
e-mail: \texttt{walter.dambrosio@unito.it}}
\and
Duccio Papini\footnote{Dipartimento di Matematica e Informatica, Universit\`a di Udine, Via delle Scienze 206, 33100 Udine, Italy.
e-mail: \texttt{duccio.papini@uniud.it}}}
\date{}
\maketitle

\begin{abstract}
We deal with the singularly perturbed Nagumo-type equation
$$
\epsilon^2 u'' + u(1-u)(u-a(s)) = 0,
$$
where $\epsilon > 0$ is a real parameter and $a: \mathbb{R} \to \mathbb{R}$ is a piecewise constant function satisfying
$0 < a(s) < 1$ for all $s$. We prove the existence of chaotic, homoclinic and heteroclinic solutions, when
$\epsilon$ is small enough. We use a dynamical systems approach, based on the Stretching Along Paths method and
on the Conley-Wa\.zewski's method.
\end{abstract}

\noindent
{\footnotesize \textbf{AMS-Subject Classification}}. {\footnotesize 34C25; 34C28; 34C37.}\\
{\footnotesize \textbf{Keywords}}. {\footnotesize Nagumo-type equation; chaotic dynamics; homoclinic and heteroclinic solutions.}

\section{Introduction}
\def\theequation{1.\arabic{equation}}\makeatother
\setcounter{equation}{0}

In this paper, we deal with the singularly perturbed Nagumo-type equation
\begin{equation}\label{eqintro}
\epsilon^2 u'' + u(1-u)(u-a(s)) = 0,
\end{equation}
where $\epsilon > 0$ is a small parameter and
$a: \mathbb{R} \to \mathbb{R}$ is a locally integrable function satisfying
$$
0 < a(s) < 1, \quad \mbox{ for every } s \in \mathbb{R}.
$$
Notice that equation \eqref{eqintro} has the two constant solutions $u \equiv 0$
and $u \equiv 1$ which actually behave like saddle points);
we will be interested in the existence of solutions $u$ satisfying $0 < u(s) < 1$ for all $s$.
\medbreak
Our investigation is motivated by a classical paper by Angenent, Mallet-Paret and Peletier \cite{AngMalPel87},
dealing with the Neumann boundary value problem associated with \eqref{eqintro} in the framework
of steady-states solutions of the corresponding parabolic problem
(which arises in population genetics). On the lines of previous works \cite{ClePel85,Kur83}, in \cite{AngMalPel87}
it is proved that, for $\epsilon$ small enough, the Neumann boundary value problem
associated with \eqref{eqintro} has multiple solutions, whose shape and limit profile (for $\epsilon \to 0^+$)
can be precisely described in terms of the zeros of the function $a - 1/2$.
This analysis suggests that the dynamics of \eqref{eqintro} could be quite rich provided
the function $a$ crosses the value $1/2$.
\medbreak
The aim of the present paper is indeed to prove the existence of solutions to \eqref{eqintro} defined on the whole real line
and exhibiting complex behavior, when $a$ switches infinitely many times between two values $a_-,a_+$ with
$$
0 < a_- < \frac{1}{2} < a_+ < 1.
$$
(see, however, Remark \ref{stabile} for possible generalizations of the above assumption).
More precisely, we will provide solutions of essentially two types. On one hand,
we find the existence of globally defined solutions rotating (in the phase-plane)
a certain number of times around $(a_{\pm},0)$ in suitable intervals $I^{\pm}_j$ ($j \in \mathbb{Z}$)
(solutions of this kind will be called \emph{chaotic});
the number of revolutions can be arbitrarily prescribed and
$\cup_{j \in \mathbb{Z}}I^{\pm}_j$ is unbounded from below and from above, i.e., solutions
oscillate infinitely many times on the whole real line. In particular, if $a$ is $T$-periodic for some $T > 0$,
we find the existence of subharmonic solutions to
\eqref{eqintro} with complex behavior; indeed, one could show that the Poincar\'e map associated with \eqref{eqintro}
on a period is topologically semiconjugated to the Bernoulli shift on a suitable number of symbols (as in \cite{CapDamPap02}, see
also Remark \ref{coniugio}).

On the other hand, we are able to produce \emph{homoclinic} and \emph{heteroclinic} solutions, having the same nodal behavior as before on bounded intervals of
arbitrarily large length and converging monotonically to one of the equilibria $(0,0)$ and $(1,0)$ for
$t \to \pm \infty$.

A similar nonlinearity is considered in the recent papers \cite{EllZan13, EllZan->, ZaZa-12, ZaZa-14}.
However, those papers deal with situations which can be considered in a certain sense dual to ours, since they have a fixed central equilibrium
(a center, in fact) while the two nearby saddle points move according to a suitable piecewise constant weight function.
Moreover, we decided to focus on finding solutions that are obtained by exploiting quite different configurations than those investigated in the
mentioned papers, and to omit the details in cases that are similar to those already treated (see Remark \ref{linkinginterno}).
\medbreak
For the proof of our results, we use the change of variable
$x(t) = u(\epsilon t)$, so that the equivalent planar system associated with \eqref{eqintro} is transformed into
\begin{equation}\label{sysintro2}
x' = y, \qquad  y' = x(x-1)(x-a(\epsilon t));
\end{equation}
hence, the smallness of $\epsilon > 0$ reflects into the fact that the function $a_{\epsilon}(t) = a(\epsilon t)$
is constant on intervals of large amplitude. As a consequence, \eqref{sysintro2} can be regarded as a slowly varying perturbation
of an autonomous system with two hyperbolic equilibrium points; in this setting, the arising of rich dynamics seems to
be a quite common phenomenum (see, among others, \cite{FelMarTan06,GedKokMis02} as well as the bibliography in \cite{ZaZa-14}).
\medbreak
In order to detect such a complex behavior, we use a dynamical systems approach,
based on a careful analysis of the trajectories of the (piecewise autonomous) system \eqref{sysintro2}.
More in detail, we first rely on a topological technique of path stretching (the so-called SAP method) developed in \cite{PapZan04b,PapZan04,
PirZan05,PirZan07} to detect the presence of symbolic dynamics (as well as of periodic points, when $a$ is periodic) for suitable Poincar\'e maps associated with \eqref{sysintro2}. As a by-product, this approach also provides us planar paths which can be used
to connect the stable and unstable manifolds of the equilibria so as to
obtain heteroclinic and homoclinic solutions with a complex nodal behavior (see, among others, \cite{EllZan13,Gav11,HolStu92,MarRebZan09}
and the references therein for related results in this direction). It has to be noticed that, since the equation is non-autonomous,
the existence of the stable/unstable manifolds is not straightforward; we use indeed the classical Conley-Wa\.zewski's method \cite{Con75,Waz47}
(see also \cite{EllZan->,PapZan00}) to prove that these sets actually exist and can be suitably localized.
\medbreak
The plan of the paper is the following. In Section \ref{sec2}, we briefly discuss the autonomous case.
In Section \ref{sec3}, we prove some stretching properties, as well as the existence of stable and unstable manifolds to
the equilibria. In Section \ref{sec4}, we state and prove our main results. Finally, some basic facts about SAP method are collected in
a final Appendix.

\section{The autonomous case}\label{sec2}
\def\theequation{2.\arabic{equation}}\makeatother
\setcounter{equation}{0}

In this section we collect some basic results for the autonomous equation
\begin{equation}\label{eqaut}
x'' + x(1-x)(x - a) = 0,
\end{equation}
where $a$ is a real constant such that
$$
0 < a < 1.
$$
Precisely, we are going to perform a phase-plane analysis for the equivalent planar system
\begin{equation}\label{sysaut}
\left\{
\begin{array}{l}
\vspace{0.1cm}
x' = y \\
y'= x(x-1)(x-a).
\end{array}
\right.
\end{equation}
We will always confine our attention to the dynamics in the vertical strip $[0,1] \times \mathbb{R}$.
\smallbreak
It is immediately seen that
the points $(0,0)$, $(a,0)$ and $(1,0)$ are the only equilibrium points of
\eqref{sysaut}. To proceed further, we define the function
$$
\mathcal{E}_a(x,y) = \frac{1}{2}y^2 + F_a(x),\quad \forall \ (x,y)\in \RR^2,
$$
where
$$
F_a(x) = -\frac{1}{4}x^4 + \frac{1+a}{3} x^3 - \frac{a}{2}x^2,\quad \forall \ x\in \RR.
$$
As well-known, system \eqref{sysaut} is conservative and
the function $t \mapsto \mathcal{E}_a(x(t),y(t))$
is constant along solutions $(x(t),y(t))$ to \eqref{sysaut}.
To describe the global dynamics of \eqref{sysaut}, we can thus study the geometry
of the level sets $\mathcal{E}_a^{-1}(c)$ for different values of $c \in \mathbb{R}$.
\smallbreak
It turns out that the value of the constant $a$
plays a significant role. We start by analyzing the case
\begin{equation}\label{a=12}
a = \frac{1}{2},
\end{equation}
which indeed gives rise to the simplest picture.
Precisely, the level set $\mathcal{E}_a^{-1}(c)$ can here be described as follows:
\begin{itemize}
\item[-] for $c = \tfrac{a-2}{12}a^2$, it is the point $(a,0)$;
\item[-] for $\tfrac{a-2}{12}a^2 < c < 0$, it is a closed cycle around $(a,0)$;
\item[-] for $c = 0$, it is the union of the points $(0,0)$, $(1,0)$ and of the heteroclinic orbits joining them;
\item[-] for $c > 0$, it is the union of two curves, one in the half-plane $\{(x,y)\in \RR^2:\ y>0\}$, one in the half-plane $\{(x,y)\in \RR^2:\ y<0\}$, connecting two points of the form $(0,y_1)$ and $(1,y_1)$ for some $y_1 \in \RR$.
\end{itemize}
The phase-portrait is shown in Figure \ref{fig1}.

\begin{figure}[!h]
\centering
\includegraphics[height=8cm,width=10cm]{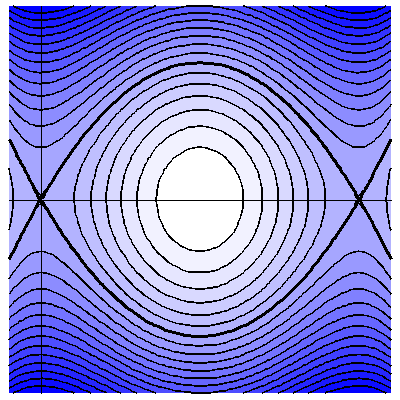}
\caption{\small{The phase-portrait of the autonomous system \eqref{sysaut} for $a = 1/2$.
The heteroclinic orbits connecting the equilibria $(0,0)$ and $(1,0)$
are painted with a darker color. For graphical reasons, a slightly different $x$ and $y$ scaling has been used.}}
\label{fig1}
\end{figure}

We now turn our attention to the case
\begin{equation}\label{a<12}
0 < a < \frac{1}{2};
\end{equation}
now, for the level set $\mathcal{E}_a^{-1}(c)$ we have the following:
\begin{itemize}
\item[-] for $c = \tfrac{a-2}{12}a^2$, it is the point $(a,0)$;
\item[-] for $\tfrac{a-2}{12}a^2 < c < 0$, it is a closed cycle around $(a,0)$;
\item[-] for $c = 0$, it is the union of the point $(0,0)$ and its homoclinic orbit $H(a)$; for further convenience, we denote by
$(z_a,0)$ the point of intersection between $H(a)$ and the positive $x$-semiaxis;
\item[-] for $0 < c < \tfrac{1-2a}{12}$, it is made by a curve lying between the homoclinic to $(0,0)$ and the stable/unstable manifold $H^{\pm} (a)$ of $(1,0)$, connecting a point of the form $(0,y_1)$ with a point of the form $(0,-y_1)$, for some
$y_1 > 0$;
\item[-] for $c = \tfrac{1-2a}{12}$, it is the union of the point $(1,0)$ and its stable/unstable manifolds;
\item[-] for $c > \tfrac{1-2a}{12}$, it is the union of two curves, one in the half-plane $\{(x,y)\in \RR^2:\ y>0\}$, one in the half-plane $\{(x,y)\in \RR^2:\ y<0\}$, connecting a point of the form $(0,y_1)$ with a point of the form $(1,y_2)$, for some $y_1, y_2\in \RR$.
\end{itemize}

Finally, for
\begin{equation}\label{a>12}
\frac{1}{2} < a < 1
\end{equation}
the phase-portrait can be obtained from the previous one with a symmetry with respect to the line $x = \tfrac{1}{2}$. The homoclinic orbit and the stable/unstable manifolds are defined in an analogous way, by swapping $(0,0)$ and $(1,0)$, and will be again denoted by $H(a)$ and $H^\pm (a)$; moreover, $(z_a,0)$ will be the point of intersection between $H(a)$ and the positive $x$-semiaxis.
Both the phase-portraits are shown in Figure \ref{fig2}.

\begin{figure}[!h]
\includegraphics[height=5cm,width=7cm]{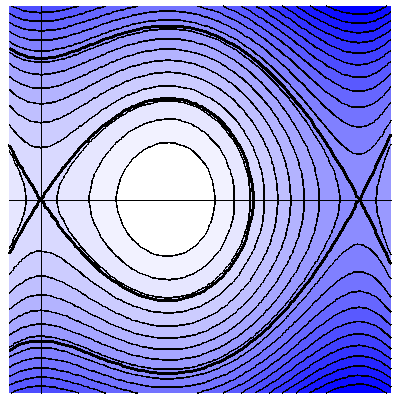}
\hfill
\includegraphics[height=5cm,width=7cm]{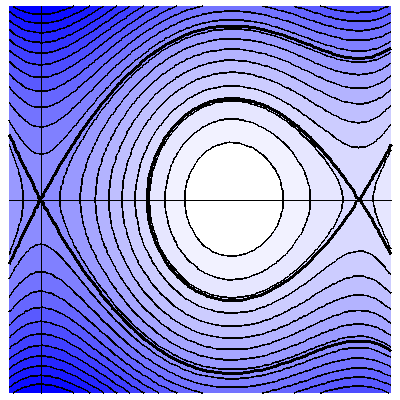}
\caption{\small{On the left, the phase-portrait of the autonomous system \eqref{sysaut} for $0 < a < 1/2$; on the right, the phase-portrait of the autonomous system \eqref{sysaut} for $1/2 < a < 1$.  The homoclinic orbits, as well as the stable/unstable
manifolds, are painted with a darker color. For graphical reasons, a slightly different $x$ and $y$ scaling has been used.}}
\label{fig2}
\end{figure}

\begin{remark}\label{bilanciato}
\textnormal{
From the above discussion, it appears that the phase-portrait of system \eqref{sysaut} is completely different in the case
$a = 1/2$ and in the case $a \neq 1/2$. Indeed, the heteroclinc orbit connecting $(0,0)$ and
$(1,0)$ for $a = 1/2$ disappear as soon as $a \neq 1/2$, splitting into orbits of different type.
In terms of the potential $F_a(x)$, we have indeed $F_a(0) = F_a(1)$ if and only if $a = 1/2$; in this case, the potential is said to be
balanced. In this context, \eqref{eqmain} can be framed in the setting of equations with unbalanced
potentials (compare with the introduction in \cite{NakTan03}).}
\end{remark}

\section{Topological lemmas}\label{sec3}
\def\theequation{3.\arabic{equation}}\makeatother
\setcounter{equation}{0}

In this section we collect the preliminary technical lemmas which will be
used in the proof of our main results.

\subsection{Stretching properties}\label{stre}

In this section, we fix two reals constants $a_-,a_+$ satisfying
$$
0 < a_- < \frac{1}{2} < a_+ < 1.
$$
Our goal is to prove some results for the dynamics of the autonomous system
\eqref{sysaut} for $a = a_-$ and $a= a_+$ on suitable sets which will be constructed below.
Throughout this section, we always refer to the definitions given in the Appendix. Also, to simplify the notation, from now on we denote by $S(a_-)$ (resp., $S(a_+)$) the planar system
\eqref{sysaut} for $a = a_-$ (resp., $a = a_+$); moreover, let
$\Theta(a,z)$ be the orbit of \eqref{sysaut} passing through the point
$z \in \mathbb{R}^2$.

For $T>0$ we finally define the maps
$\Psi_-^T$ and $\Psi_+^T$ as the (restriction to the vertical strip $[0,1] \times \mathbb{R}$) of Poincar\'e maps associated with systems
$S(a_-)$ and $S(a_+)$, respectively, on the interval
$[0,T]$, i.e.
$$
\Psi_\pm^T(x_0,y_0) = (x(T;x_0,y_0),y(T;x_0,y_0)),\quad \forall \, (x_0,y_0) \in [0,1] \times \mathbb{R},
$$
where $(x(\cdot;x_0,y_0),y(\cdot;x_0,y_0))$ is the unique solution
to $S(a_\pm)$ satisfying the initial condition $(x(0),y(0)) = (x_0,y_0)$.
Since we will be interested in the dynamics on the strip $[0,1] \times \mathbb{R}$ only, we can assume that such maps are globally defined just
by suitably modifying the nonlinearity $f_a(x) = x(1-x)(x-a)$ for $x \notin [0,1]$ (for instance, by setting $f_a(x) = 0$
for $x \notin [0,1]$).
\smallbreak
Let us fix $p_-$ and $p_+$ such that
\begin{equation}\label{ordine0}
\frac{1}{2} < \max\left\{z_{a_-},a_+ \right\} < p_- < 1 \qquad \mbox{ and } \qquad 0 < p_+ < \min\left\{a_-,z_{a_+}\right\} < \frac{1}{2}.
\end{equation}
Let $\mathcal{R}_1$ and $\mathcal{R}_3$ be the two connected components of the intersection of the following two strips:
\begin{itemize}
\item[-] the strip $S_-$ between the stable manifold $H^+(a_-)$
and the orbit $\Theta (a_-,(p_-,0))$ of $S(a_-)$,
\item[-] the strip $S_+$ between the unstable manifold $H^+(a_+)$
and the orbit $\Theta (a_-,(p_+,0))$ of $S(a_+)$.
\end{itemize}
Notice that, since $p_+ < z_{a_+}$ and $z_{a_-} < p_-$,
the above defined orbits passing through
$(p_{\pm},0)$ lie between the homoclinics and the stable/unstable manifolds of the corresponding systems
$S(a_\pm)$. Therefore, the defined regions $\mathcal{R}_1$ and $\mathcal{R}_3$ are topological rectangles, and we name $\mathcal{R}_1$ (resp., $\mathcal{R}_3$) the one contained in the upper (resp., lower) half-plane.
\smallbreak
In what follows we also need to provide an orientation for the above constructed rectangles; to this aim, we denote by
$\mathcal{R}_1^\pm$ the components of the boundary of $\mathcal{R}_1$ lying on orbits of system
$S(a_\pm)$ and by $\mathcal{R}_3^\pm$ the components of the boundary of $\mathcal{R}_1$ lying on orbits of system
$S(a_\mp)$.
\medbreak
Now, let $\mathcal{R}_2$ be the intersection of the following
regions:
\begin{itemize}
\item[-] the strip $S_-$ defined above,
\item[-] the annular region between the homoclinic $H(a_+)$ and the closed orbit $\Theta(a_+,(q_+,0))$ of $S(a_+)$, for some $q_+$ such that
\begin{equation}\label{ordine}
p_- < q_+ < 1,
\end{equation}
\item[-] the upper half-plane.
\end{itemize}
Straightforward computations show that the homoclinic $H(a_+)$ is contained in the region bounded
by the stable and unstable manifolds $H^\pm (a_-)$. Moreover, since $a_+ < p_- < q_+$ (recall both \eqref{ordine0} and \eqref{ordine})
the closed orbit $\Theta(a_+,(q_+,0))$
winds around the point $(p_-,0)$, as well. These facts together guarantee that $\mathcal{R}_2$ is a topological rectangle.
\smallbreak
In a similar way, we can define the rectangle
$\mathcal{R}_4$ as the intersection of the following
regions:
\begin{itemize}
\item[-] the strip $S_+$ defined above,
\item[-] the annular region between the homoclinic $H(a_-)$ and the closed orbit $\Theta(a_-,(q_-,0))$ of $S(a_-)$, for some $q_-$ such that
\begin{equation}\label{ordine1}
0 < q_- < p_+,
\end{equation}
\item[-] the lower half-plane.
\end{itemize}
Notice that the rectangles $\mathcal{R}_2$ and $\mathcal{R}_4$ depend also on the choice of the numbers
$q_{\pm}$ satisfying \eqref{ordine} and \eqref{ordine1} which is not arbitrary and will be specified in
Proposition \ref{stretching}.
\smallbreak
We can again orientate the above constructed rectangles, denoting by $\mathcal{R}_2^-$ the components of the boundary of $\mathcal{R}_2$ lying on orbits of the systems $S(a_+)$ and by $\mathcal{R}_2^+$ the remaining components; finally,
$\mathcal{R}_4^-$ are the components of the boundary of $\mathcal{R}_4$ lying on orbits of the systems $S(a_-)$
and $\mathcal{R}_4^+$ are the remaining components.
\medbreak
We illustrate the whole construction of the rectangles $\mathcal{R}_i$, for $i=1,\ldots,4$ in Figure \ref{figrett}.
\begin{figure}[!h]
\centering
\includegraphics[height=10.5cm,width=12.5cm]{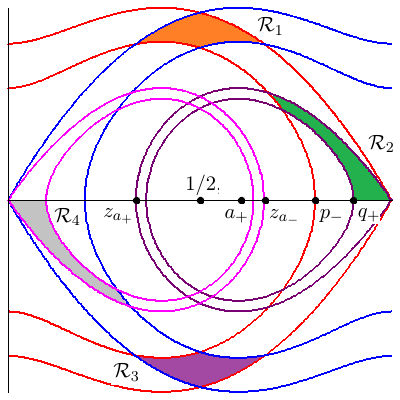}
\caption{\small{The construction of the topological rectangles $\mathcal{R}_i$, for $i=1,\ldots,4$; in particular,
we comment in detail the construction of $\mathcal{R}_1$ and $\mathcal{R}_2$.
Consider the stable manifold (to $(1,0)$) $H^+(a_-)$ (in red), as well
as the homoclinic (to $(0,0)$) $H(a_-)$ (in pink) and recall that such an orbit intersects the
positive $x$-semiaxis at the point $(z_{a_-},0)$. The condition
$p_- > z_{a_-}$ (see \eqref{ordine0}) then guarantees that the piece of orbit of $\Theta (a_-,(p_-,0))$ (also in red) in the upper half-plane lies between
$H(a_-)$ and $H^+(a_-)$. With an analogous construction for the system $S(a_+)$,
and taking into account that $p_+ < 1/2 < p_-$ (see \eqref{ordine0} again) we can thus determine the rectangle
$\mathcal{R}_1$ (in orange) in the upper-half plane (as well as the rectangle $\mathcal{R}_3$ (in purple) in the lower half-plane.
Now, consider the homoclinic (to $(1,0)$) $H(a_+)$ (in black), intersecting the positive $x$-semiaxis
at the point $(z_{a_+},0)$. It is easy to check that this orbit is contained in the region bounded
the stable and unstable manifold $H^\pm (a_-)$; moreover, the condition $z_{a_+} < a_+ < p_-$ (see \eqref{ordine})
implies that it intersects the orbit $\Theta (a_-,(p_-,0))$. Finally, focus on the orbit $\Theta(a_+,(q_+,0))$
(also in black). Since $a_+ < p_- < q_+$ (see both \eqref{ordine0} and \eqref{ordine}), such an orbit
intersects $\Theta (a_-,(p_-,0))$ as well (that is, the closed orbit $\Theta(a_+,(q_+,0))$
winds around the point $(p_-,0)$). This determines the rectangle $\mathcal{R}_2$ (in green). An analogous construction
gives $\mathcal{R}_4$ (in grey). Recall that, for the validity of the stretching properties in Proposition
\ref{stretching}, $q_{\pm}$ cannot be arbitrary numbers satisfying \eqref{ordine} and \eqref{ordine1},
but they have to fulfill further conditions (see the proof).}}
\label{figrett}
\end{figure}

We are now in position to prove our crucial result on the stretching:

\begin{proposition}\label{stretching}
The following stretching properties hold true.
\begin{itemize}
\item[1.] There exists $T_1^* > 0$ such that, for every $T_1 > T_1^*$, we have
$$
\Psi_-^{T_1}: (\mathcal{R}_1,\mathcal{R}_1^-) \,\stretchx\, (\mathcal{R}_2,\mathcal{R}_2^-),
\qquad
\Psi_-^{T_1}: (\mathcal{R}_2,\mathcal{R}_2^+) \,\stretchx\, (\mathcal{R}_3,\mathcal{R}_3^-),
$$
$$
\Psi_+^{T_1}: (\mathcal{R}_3,\mathcal{R}_3^-) \,\stretchx\, (\mathcal{R}_4,\mathcal{R}_4^-),
\qquad
\Psi_+^{T_1}: (\mathcal{R}_4,\mathcal{R}_4^+) \,\stretchx\, (\mathcal{R}_1,\mathcal{R}_1^-),
$$
for a suitable choice of $q^+$ and $q^-$ (close enough to $1$ and to $0$, respectively).
\item[2.] For any $N \in \mathbb{N}$, there exists $T_2^*(N) > 0$ such that, for
every $T_2 > T_2^*(N)$, we have
$$
\Psi_+^{T_2}: (\mathcal{R}_2,\mathcal{R}_2^-) \,\stretchx^N\, (\mathcal{R}_2,\mathcal{R}_2^+),
\qquad
\Psi_-^{T_2}: (\mathcal{R}_4,\mathcal{R}_4^-) \,\stretchx^N\, (\mathcal{R}_4,\mathcal{R}_4^+).
$$
\end{itemize}
\end{proposition}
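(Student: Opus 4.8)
The plan is to deduce both statements from the stretching-along-paths criteria recalled in the Appendix, applied to the Poincar\'e maps $\Psi_\pm^{T}$ of the \emph{autonomous} systems $S(a_\pm)$. The decisive structural fact is that every boundary arc of the rectangles $\mathcal{R}_i$ lies on an orbit of either $S(a_-)$ or $S(a_+)$, and that each $\Psi_\pm^{T}$ moves points along the orbits of the corresponding system, hence leaves every such orbit invariant as a set. In each of the six relations one checks that the two \emph{departure} sides always lie on orbits of the very system generating the map (so they lie on the boundary of an invariant strip and are mapped into themselves), while the two \emph{arrival} sides lie on orbits of the \emph{other} system. Thus the whole problem reduces to a planar crossing count for the flow of a single autonomous system, read off the phase portraits of Section \ref{sec2}.

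For Part 1, I would first note that in each of the four relations the two rectangles lie in a common invariant strip ($S_-$ for the two maps $\Psi_-^{T_1}$, $S_+$ for the two maps $\Psi_+^{T_1}$), foliated by the orbits of the generating system. A path joining the departure sides crosses this strip transversally to the foliation, so its image again joins the two bounding orbits of the strip. The point is to show that, for $T_1$ large, this image is so strongly \emph{sheared along the strip} that it sweeps the whole extent of the target rectangle in the along-strip direction, entering through one arrival side and leaving through the other; by continuity one then extracts a sub-path whose image crosses the target connecting its two arrival sides. The shear is produced by the saddle sitting on the strip boundary ($(1,0)$ for $S(a_-)$, $(0,0)$ for $S(a_+)$): the transit time along the orbits of the strip is bounded on the inner bounding orbit $\Theta(a_\pm,(p_\pm,0))$ but blows up as one approaches the invariant manifold forming the other boundary, so the time-$T_1$ map stretches the cross-section arbitrarily far along the strip as $T_1\to\infty$. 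The ordering conditions \eqref{ordine0}, \eqref{ordine}, \eqref{ordine1} and the relative positions of the homoclinics and manifolds established in Section \ref{sec2} (illustrated in Figure \ref{figrett}) guarantee that the swept band meets the target rectangle in the correct direction; this is exactly where $q_+$ must be close to $1$ and $q_-$ close to $0$, so that the arrival sides $\mathcal{R}_2^-,\mathcal{R}_4^-$ (which sit near the homoclinics $H(a_+),H(a_-)$) are actually reached.

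For Part 2 I would use the rotation (twist) form of the stretching criterion. Both $\mathcal{R}_2$ and $\mathcal{R}_4$ are sub-rectangles of an annular region surrounding a \emph{center} --- $(a_+,0)$ for $\mathcal{R}_2$ under $\Psi_+^{T_2}$, $(a_-,0)$ for $\mathcal{R}_4$ under $\Psi_-^{T_2}$ --- bounded by a homoclinic and an inner closed cycle $\Theta(\cdot,(q_\pm,0))$, and these two arcs are precisely the departure sides. On such an annulus the map is a twist: writing $\tau(c)$ for the period of the periodic orbit at energy level $c$, the angular displacement of a point about the center over time $T_2$ is $\approx 2\pi T_2/\tau(c)$, where $\tau$ is continuous, equal to the finite period of the inner cycle on one boundary and tending to $+\infty$ as $c$ approaches the homoclinic level (the standard logarithmic blow-up of the transit time near the saddle). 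Hence, along any path joining the two departure sides (i.e. crossing the annulus radially), the winding number of the image about the center ranges continuously from $\approx T_2/\tau_{\mathrm{in}}$ down to a bounded quantity, so its total variation exceeds $N$ as soon as $T_2>T_2^*(N)$. The twist criterion then produces $N$ disjoint sub-paths whose images each cross the annulus angularly, connecting the two arrival sides $\mathcal{R}_2^+$ (resp. $\mathcal{R}_4^+$) on the strip boundary, which gives the $N$-fold stretching.

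The main obstacle I anticipate is geometric and asymptotic rather than topological. In Part 1 it is verifying, from the mutual position of the orbits of $S(a_-)$ and $S(a_+)$, that the strongly sheared image overlaps the target rectangle transversally in the correct sense; this forces the simultaneous calibration of $p_\pm$ through \eqref{ordine0} and of $q_\pm$ close to $1$ and $0$. In Part 2 it is the quantitative control of the period function $\tau(c)$, in particular its blow-up at the homoclinic level, which must be turned into a lower bound on the winding that grows without bound in $T_2$. Once these two facts are in place, both families of stretching relations follow from the Appendix lemmas by routine continuity and compactness arguments.
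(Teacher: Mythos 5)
Your proposal is correct and follows essentially the same route as the paper: Part~1 is proved there by exactly your shear mechanism (the endpoint of the path on the stable/unstable manifold has image staying on that manifold, while the endpoint on $\Theta(a_\mp,(p_\mp,0))$ crosses into the opposite half-plane because $T_1^*$ is taken to be the transit time of that orbit across the right half-plane, positive invariance of the strip doing the rest), and Part~2 by your twist/winding argument in polar coordinates centered at $(a_\pm,0)$. The only differences are in how the details are closed: the paper resolves your anticipated obstacle in Part~1 by choosing $q_\pm$ \emph{after} fixing $T_1$, namely $q_+ = \sup \left\{ x \in [p_-,1] : (\Psi_-^{T_1})^{-1}(x,0) \in \mathcal{R}_1 \right\}$, which guarantees the image path can meet the positive $x$-axis only between $(p_-,0)$ and $(q_+,0)$ and hence cannot escape through the $x$-axis portion of $\partial\mathcal{R}_2$ (your ``$q_\pm$ close enough to $1$ and $0$'' is right, but only with this $T_1$-dependence made explicit), while in Part~2 the paper does not need the blow-up of the period function $\tau(c)$ at the homoclinic level: since the outer departure side lies on the stable manifold itself, the angle of that endpoint simply remains in $(-\pi,0)$ for all time, and $T_2^*(N) = (N+1)$ times the exact period of the inner cycle $\Theta(a_+,(q_+,0))$ supplies the winding $\theta(T_2,\gamma(0)) > 2N\pi$ at the other endpoint.
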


\begin{proof} 1. We give the details for the stretching property
$$
\Psi_-^{T_1}: (\mathcal{R}_1,\mathcal{R}_1^-) \,\stretchx \, (\mathcal{R}_2,\mathcal{R}_2^-),
$$
the other three statements of the first group being analogous.
We first observe that the time needed to cover the piece of the orbit $\Theta(a_-,(p_-,0))$ contained in the right half-plane is given by
$$
\sqrt{2} \int_0^{p_-} \frac{dx}{\sqrt{F_{a_-}(p_-)-F_{a_-}(x)}};
$$
similarly, the time needed to cover the piece of the orbit $\Theta(a_+,(p_+,0))$ contained in the right half-plane is given by
$$
\sqrt{2} \int_{p_+}^1 \frac{dx}{\sqrt{F_{a_+}(p_+)-F_{a_+}(x)}}.
$$
Now, let
$$
T_1^* = \sqrt{2}  \max\left\{\int_0^{p_-} \frac{dx}{\sqrt{F_{a_-}(p_-)-F_{a_-}(x)}}, \int_{p_+}^1 \frac{dx}{\sqrt{F_{a_+}(p_+)-F_{a_+}(x)}}\right\}
$$
and fix $T_1 > T_1^*$. Moreover, we define
$$
q_+ = \sup \left\{ x \in [p_-,1] : \, (\Psi_-^{T_1})^{-1}(x,0) \in \mathcal{R}_1 \right\}.
$$
Let us observe that the above set is non-empty since, in view of the choice of $T_1$,
$x \mapsto (\Psi_-^{T_1})^{-1}(x,0)$ ($x \in [p_-,1]$) parameterizes a curve joining a point in the second quadrant with
the point $(1,0)$, whose intersection in the first quadrant is contained in $S^-$. By a continuity argument, moreover,
we can easily prove that $p_- < q_+ < 1$. In this manner, \eqref{ordine} is satisfied and this completely determines the rectangle
$\mathcal{R}_2$.
This choice of $ q_{+} $ helps in controlling the behavior of the solutions which move according to system $ S( a_{-} ) $
on a time interval of length $ T_{1} $ and either start from $ \mathcal{R}_{1} $ or arrive on $ \mathcal{R}_{3} $.
In fact, such solutions evolve inside the strip $ S^{-} $ and, by the choice of $ q_{+} $, they can cross the positive $ x $ axis only at a point
lying between $ ( p_{-}, 0 ) $ and $ ( q_{+}, 0 ) $.
Similarly, we define
\[
q_- = \inf \left\{ x \in [ 0, p_{+} ] : \, (\Psi_{+}^{T_1})^{-1}(x,0) \in \mathcal{R}_3 \right\}.
\]
in order to complete the definition of $ \mathcal{R}_{4} $.
\vs{6}
\ni
We now pass to the verification of the stretching property, according to Definition \ref{stretchdef}. Let $\gamma: [0,1] \to \mathcal{R}_1$
be a path such that $\gamma(0)$ and $\gamma(1)$ lies on different components of $\mathcal{R}_1^-$ and, just to fix the ideas,
suppose that $\gamma(1)$ lies on the stable manifold $H^+(a_-)$.

\ni
We observe that $\Psi_-^{T_1}(\gamma(1))$ remains on $H^+(a_-)$ in the first quadrant while, by the choice of $T_1$, $\Psi_-^{T_1}(\gamma(0))$ lies
in the third quadrant; hence, by the positive invariance of the strip $S_-$ for the flow of $S(a_-)$,
we have
$$
\Psi_-^{T_1}(\gamma([0,1])) \cap \mathcal{R}_2 \neq \emptyset.
$$
On the other hand, by the choices of $q_+$ and $T_1$, $\Psi_-^{T_1}(\gamma([0,1]))$ cannot intersect
the boundary of $\mathcal{R}_2$ on the $x$-axis. As a consequence, it intersects the two
components of $\mathcal{R}_2^-$, say $\mathcal{R}_{2,u}^-$ and $\mathcal{R}_{2,d}^-$
($d$ means ``down'' and $u$ means ``up'', with obvious meaning).
Now, let
$$
t_\gamma^- = \sup \left\{ t \in [0,1] : \, \Psi_-^{T_1}(\gamma(t)) \in \mathcal{R}_{2,d}^- \right\}
$$
and
$$
t_\gamma^+ = \inf \left\{ t > t_\gamma^- : \, \Psi_-^{T_1}(\gamma(t)) \in \mathcal{R}_{2,u}^- \right\}.
$$
By construction, the sub-path $\gamma_1 = \Psi_-^{T_1} \circ \gamma|_{[t_\gamma^-,t_\gamma^+]}$
has image contained in $\mathcal{R}_2$ and crosses it from
$\mathcal{R}_{2,d}^-$ to $\mathcal{R}_{2,u}^-$, as desired.
\vs{12}
\ni
2. We now turn our attention to the second group of statements, proving that
$$
\Psi_+^{T_2}: (\mathcal{R}_2,\mathcal{R}_2^-) \,\stretchx^N\, (\mathcal{R}_2,\mathcal{R}_2^+)
$$
(the other one being analogous); we observe that here $\mathcal{R}_2$ appears with different orientations when considered as the domain
or the target space of $\Psi_+^{T_2}$. We introduce the polar coordinate system, centered at $(a_+,0)$,
$$
x = a_+ + r \cos \theta, \qquad y = -r \sin \theta;
$$
then, for any $z \in \mathcal{R}_2$, we denote by
$\theta(\cdot;z)$ the (unique) continuous angular function associated
to the solution of $S(a_+)$ starting from $z$ at the time $t=0$, and such that $\theta(0;z) \in [-\pi,0]$.
Moreover, we recall that the time needed by any solution to system $S(a_+)$ starting from
a point on $\mathcal{R}_{2,d}^-$ to perform one turn around $(a_+,0)$ can be computed as
$$
\sqrt{2} \int_{\eta^+}^{q^+} \frac{dx}{\sqrt{F_{a^+}(q^+)-F_{a^+}(x)}},
$$
where $\eta^+ \in (0,q^+)$ is the only number such that $F_{a^+}(\eta^+) = F_{a^+}(q^+)$.
\vs{6}
\ni
Given the natural number $N \geq 1$, we define
$$
T_2^*(N) = (N+1)\sqrt{2} \int_{\eta^+}^{q^+} \frac{dx}{\sqrt{F_{a^+}(q^+)-F_{a^+}(x)}}
$$
and let $T_2 > T_2^*$. For any $j=1,\ldots,N$ we set
$$
\mathcal{H}_j = \{ z \in \mathcal{R}_2 : \, \theta(T_2,z) \in [(2j-1)\pi,2j\pi] \};
$$
let us observe that the sets $\mathcal{H}_1,\ldots,\mathcal{H}_N$ are compact and disjoint.
\vs{6}
\ni
Now, let $\gamma: [0,1] \to \mathcal{R}_2$ a path such that $\gamma(0)$ and $\gamma(1)$
lie on different components of $\mathcal{R}_2^-$ and, to fix the ideas, assume that $\gamma(1)$ lies on
$\mathcal{R}_{2,u}^-$ (that is, on the stable manifold to $(1,0)$).
Hence, $\theta(T_2,\gamma(1)) < 0$ while, in view of the choice of
$T_2$, $\theta(T_2,\gamma(0)) > 2N\pi$. As a consequence, we can find $N$-disjoint subintervals
$I_1,\ldots,I_N \subset [0,1]$ such that $\gamma(s) \in \mathcal{H}_j$ for any $s \in I_j$
and $\Psi_+^{T_2} \circ \gamma|_{I_j}$
crosses $\mathcal{R}_2$ from one component of $\mathcal{R}_{2}^+$ to the other component
of $\mathcal{R}_{2}^+$.
\end{proof}

\begin{remark} \label{nodali}
\textnormal{We observe that the stretching relationships proved in point 2. of Proposition \ref{stretching}
naturally provide information about some nodal properties of the solutions of the systems $S(a_{\pm})$.
In particular, the (clockwise) winding number around the point $(a_+,0)$ of solutions of $S(a_+)$ starting in
$\mathcal{H}_j$ (for some $j=1, \ldots, N$), in a time interval of length $T_2$, lies in the interval $(j - 1/2,j + 1/2)$
and, more precisely, the first derivative of those solutions vanishes exactly $ 2 j $ times.
Similar considerations holds of course for solutions to
$S(a_-)$ around the point $(a_-,0)$. With a slight abuse of terminology, in our main results we will say that, in this case, solutions
make $j$ turns around $(a_{\pm},0)$.}
\end{remark}

\begin{figure}[!h]
\centering
\includegraphics[height=6cm,width=13cm]{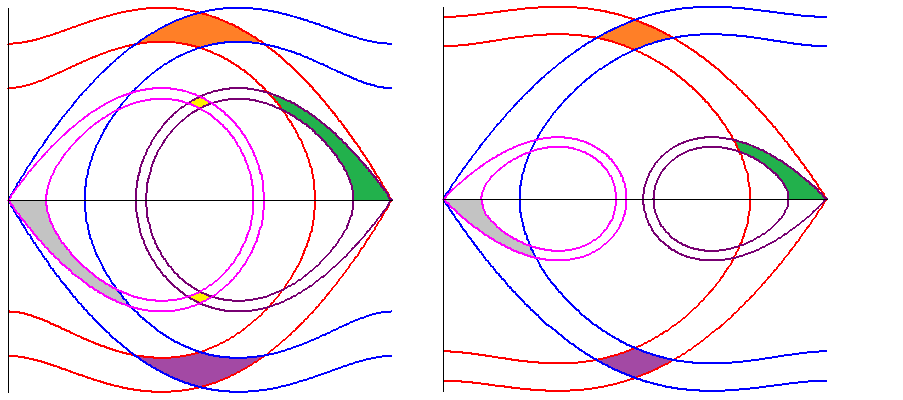}
\caption{\small{On the left, the construction of the rectangles when $a_- = 0.4$ and $a_+ = 0.6$,
so that the homoclinics intersect (this is indeed the same picture as in Figure \ref{figrett})}.
On the right, the construction of the rectangles in a situation in which the homoclinics do not intersect; here
$a_- = 0.3$ and $a_+ = 0.7$. One can immediately realize that nothing changes.
However, on the left again, it is shown how the linking between the homoclinics provides two further topological rectangles,
painted in yellow. It is well-known that such rectangles verify stretching properties, within a Linked Twist Maps
framework (see \cite{MarRebZan10}).}
\label{figltm}
\end{figure}

\begin{remark}\label{LTM}
\textnormal{We observe that the construction of the rectangles $\mathcal{R}_i$ ($i=1,\ldots,4$)
given in this section is independent from the fact that the homoclinics $H(a_-)$ or $H(a_+)$ intersect,
that is, we are not assuming that $z_{a_+} \leq z_{a_-}$ (see Figure \ref{figltm}).
However, it is worth noticing that when the strict inequality
\begin{equation}\label{link}
z_{a_+} < z_{a_-}
\end{equation}
holds true, one can easily construct two further rectangles satisfying stretching conditions.
This is indeed a well-known geometrical configuration, related to the concept Linked Twist Map,
which has been extensively discussed in \cite{MarRebZan10}. We refer again to Figure \ref{figltm}.
With elementary computations, it is possible to show that condition \eqref{link} is satisfied when $a_{\pm}$ are not too far
from the value $1/2$.}
\end{remark}

\subsection{Stable and unstable manifolds}

In this section we prove the existence of stable and unstable manifolds to the equilibria
of a nonautonomous planar system. Our result will be given in a slightly more general setting
than needed; more precisely, we deal with the planar system
\begin{equation}\label{sysq}
\left\{
\begin{array}{l}
\vspace{0.1cm}
x' = y \\
y'=  x(x-1)(x-q(t)),
\end{array}
\right.
\end{equation}
with $q: \mathbb{R} \to \mathbb{R}$ a locally integrable function.
We use the notation $z(t;p,t_0)$ for the (unique) solution to \eqref{sysq}
satisfying the condition $z(t_0) = p$.

\begin{proposition}\label{waze}
The following statements hold true.
\begin{enumerate}
\item Assume that
$$
0 < a_- \leq q(t) \leq a_+ < 1, \quad \mbox{ for a.e. } t \in (-\infty,t_0],
$$
for some $t_0 \in \mathbb{R}$. Then there exist two continua (i.e., connected compact sets)
$\Gamma_{-\infty}^0, \Gamma_{-\infty}^1 \subset \mathbb{R}^2$, lying between the unstable manifolds to $(0,0)$ and $(1,0)$
of the systems $S(a_-)$ and $S(a_+)$ respectively, and such that,
for any $p \in \Gamma_{-\infty}^i$, it holds that
$$
\lim_{t \to -\infty} z(t;p,t_0) \to (i,0), \quad \mbox{ for } i =0,1.
$$
Moreover, there exist two constants $a_-^0 \in (0,a_-)$ and
$a_+^1 \in (a_+,1)$ such that
$$
\Gamma_{-\infty}^0 \cap \left([0,a_-^0] \times \mathbb{R} \right) =
\{(x,y_{-\infty}^0(x)) : \, x \in [0,a_-^0] \}
$$
and
$$
\Gamma_{-\infty}^1 \cap \left([a_+^1,1] \times \mathbb{R} \right) = \{(x,y_{-\infty}^1(x)) : \, x \in [a_+^1,1] \},
$$
for suitable continuous functions
$y_{-\infty}^0:[0,a_-^0] \to \mathbb{R}^+$ and $y_{-\infty}^1:[a_+^1,1] \to \mathbb{R}^-$.
\item Assume that
$$
0 < a_- \leq q(t) \leq a_+ < 1, \quad \mbox{ for a.e. } t \in [t_0,+\infty),
$$
for some $t_0 \in \mathbb{R}$. Then there exist two continua (i.e., connected compact sets)
$\Gamma_{+\infty}^0, \Gamma_{+\infty}^1 \subset \mathbb{R}^2$, lying between the stable manifolds to $(0,0)$ and $(1,0)$
of the systems $S(a_-)$ and $S(a_+)$ respectively, and such that,
for any $p \in \Gamma_{+\infty}^i$, it holds that
$$
\lim_{t \to +\infty} z(t;p,t_0) \to (i,0), \quad \mbox{ for } i =0,1.
$$
Moreover, there exist two constants $a_-^0 \in (0,a_-)$ and
$a_+^1 \in (a_+,1)$ (the same ones as in the previous statement) such that
$$
\Gamma_{+\infty}^0 \cap \left([0,a_-^0] \times \mathbb{R} \right) =
\{(x,y_{+\infty}^0(x)) : \, x \in [0,a_-^0] \}
$$
and
$$
\Gamma_{+\infty}^1 \cap \left( [a_+^1,1] \times \mathbb{R} \right) = \{(x,y_{+\infty}^1(x)) : \, x \in [a_+^1,1] \},
$$
for suitable continuous functions
$y_{+\infty}^0:[0,a_-^0] \to \mathbb{R}^-$ and $y_{+\infty}^1:[a_+^1,1] \to \mathbb{R}^+$.
\end{enumerate}
\end{proposition}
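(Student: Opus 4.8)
\ni
The plan is to construct all four continua by the Conley--Wa\.zewski method, reducing everything to the single case of the unstable manifold $\Gamma_{-\infty}^0$ of $(0,0)$ in statement~1. Indeed, statement~2 follows from statement~1 by the time reversal $t \mapsto -t$, which turns $q(t)$ into $q(-t)$ (preserving the bounds $a_- \le q \le a_+$), exchanges stable and unstable manifolds and the roles of $\mathbb{R}^+$ and $\mathbb{R}^-$; moreover, the manifolds of $(1,0)$ are obtained from those of $(0,0)$ through the reflection $x \mapsto 1-x$, which conjugates \eqref{sysq} with datum $q$ to the same system with $1-q$, swaps $(0,0)$ and $(1,0)$, and turns the bounds into $0 < 1-a_+ \le 1-q \le 1-a_- < 1$. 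For $\Gamma_{-\infty}^0$ I would work near the origin inside the curvilinear wedge
$$
W^0 = \Bigl\{ (x,y) : 0 \le x \le a_-^0, \ \sqrt{-2F_{a_-}(x)} \le y \le \sqrt{-2F_{a_+}(x)} \Bigr\},
$$
whose lower and upper boundaries are exactly the first-quadrant branches of the unstable manifolds of $(0,0)$ for $S(a_-)$ and $S(a_+)$ (both lying on the level $\mathcal{E}_{a_\pm} = 0$), capped on the right by the segment $\{x = a_-^0\}$. Here $a_-^0 \in (0,a_-)$ is small and the ordering of the two curves is guaranteed by $\partial_a(-2F_a(x)) = x^2(1 - \tfrac23 x) > 0$ on $(0,1)$.

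\ni
The heart of the construction is the sign of the energy along the nonautonomous flow. Using $F_a'(x) = -x(x-1)(x-a)$, a direct computation gives, for any solution of \eqref{sysq},
$$
\frac{d}{dt}\,\mathcal{E}_a(x(t),y(t)) = y \bigl(F_a'(x) - F_{q(t)}'(x)\bigr) = y\,x(x-1)\bigl(a - q(t)\bigr).
$$
Since $0 < x < 1$ and $y > 0$ in the interior of $W^0$, and $a_- \le q(t) \le a_+$ on $(-\infty,t_0]$, this shows that $\mathcal{E}_{a_-}$ is nondecreasing and $\mathcal{E}_{a_+}$ is nonincreasing along forward orbits; equivalently, for the \emph{backward} flow the two curved sides $\{\mathcal{E}_{a_-}=0\}$ and $\{\mathcal{E}_{a_+}=0\}$ form the exit set, while $\{x=a_-^0\}$ (where $x'=y>0$) is an entrance. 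The same monotonicity shows that an orbit leaving $W^0$ through a curved side never returns, so that $W^0$ is a genuine Wa\.zewski set for the backward flow. Its exit set being disconnected while $W^0$ is connected, the Conley--Wa\.zewski principle (cf.\ \cite{Con75,Waz47,PapZan00}) yields a continuum of points whose backward orbit never leaves $W^0$; being connected and meeting both the origin corner and the entrance side, its projection onto the $x$-axis is all of $[0,a_-^0]$.

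\ni
Two further points complete the local description. First, convergence: a trapped backward orbit has $x' = y > 0$, so $x(\cdot)$ is strictly decreasing and bounded below as $t \to -\infty$, hence $x(t) \to x_\infty \ge 0$; choosing $t_n \to -\infty$ with $x'(t_n)=y(t_n)\to 0$ and using $y \ge \sqrt{-2F_{a_-}(x)}$ in $W^0$ forces $x_\infty = 0$, whence $z(t) \to (0,0)$. Second, the \emph{graph property}, which is the delicate step. Here I would exploit that system \eqref{sysq} is divergence-free, hence area-preserving: if the trapped set contained a two-cell $A$ of positive area, its backward images would keep that area, yet (by a Dini-type argument the convergence to the origin is uniform on the compact $A$) they would eventually be confined to $W^0 \cap \{x \le \delta\}$, a thin cusp of area $o(1)$ as $\delta \to 0$ --- a contradiction. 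Thus the trapped set has empty interior, and, combined with the monotone dependence on the height $y$ of the exit alternative (exit through the bottom versus the top side), this forces exactly one trapped point over each vertical line $\{x=c\}$. The trapped set is therefore the graph of a continuous positive function $y_{-\infty}^0$ on $[0,a_-^0]$, squeezed between the two separatrices; the explicit slope equation $m' = (x-1)(x-q(t)) - m^2$ for $m = y/x$, whose coefficient tends to $q(t) \in [a_-,a_+]$ as $x \to 0$, confirms that this graph lies between the two unstable manifolds with slope near $[\sqrt{a_-},\sqrt{a_+}]$.

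\ni
Finally I would assemble the global continuum $\Gamma_{-\infty}^0 = \{p : z(t;p,t_0) \to (0,0)\}$: every such orbit eventually enters $W^0$ and there coincides, by the single-valuedness just proved, with the local unstable manifold, so $\Gamma_{-\infty}^0$ is the backward flow-image of the local graph --- hence connected, and compact because it stays between the two autonomous separatrices on $(-\infty,t_0]$ by the same energy monotonicity. Its intersection with $[0,a_-^0]\times\mathbb{R}$ is precisely the graph of $y_{-\infty}^0$. The remaining three continua follow from the time-reversal and reflection symmetries of the first paragraph, which also explain why the constants $a_-^0, a_+^1$ can be chosen the same in both statements (the local analysis near each equilibrium is identical up to symmetry). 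I expect the main obstacle to be exactly the single-valuedness step: the bare Wa\.zewski argument delivers only a continuum, and upgrading it to a graph requires the hyperbolic structure at the saddle --- captured here by area preservation (or, equivalently, by the exponential dichotomy underlying the Riccati slope equation) --- together with the care needed to match the local manifold with the globally defined backward-convergent set.
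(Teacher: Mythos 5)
Your overall architecture is genuinely close to the paper's: the symmetry reduction (statement 2 from statement 1 via the $y\mapsto -y$, $t\mapsto -t$ symmetry; the equilibrium $(1,0)$ treated analogously), the wedge between the two unstable separatrices capped by a vertical segment, the backward first-exit map, and the energy computation $\tfrac{d}{dt}\mathcal{E}_a(x,y)=y\,x(x-1)\,(a-q(t))$ are all exactly the ingredients used there. But there is a genuine gap at the Wa\.zewski step: you bound your region $W^0$ by the unstable manifolds of $S(a_-)$ and $S(a_+)$ \emph{themselves}. Since the hypothesis allows $q(t)=a_\pm$ on sets of positive measure --- and in the intended application $q$ is piecewise constant with values \emph{exactly} $a_-$ and $a_+$ --- the energy monotonicity along your curved sides is non-strict: on any time interval where $q\equiv a_-$ the lower curve is invariant, so its points do not exit at all, let alone immediately. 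Consequently the curved sides are \emph{not} the immediate-exit set, that set need not be closed relative to the set of eventually exiting points, and the continuity of the first backward exit map --- which is what the Conley--Wa\.zewski principle \cite{Con75} and the continuum lemma \cite[Corollary 6]{RebZan00} actually require --- is not guaranteed. The paper circumvents precisely this by working in the enlarged triangle $\Sigma_\epsilon$ bounded by the unstable manifolds of $S(a_--\epsilon)$ and $S(a_++\epsilon)$, where egress is strict, and then letting $\epsilon\to 0^+$ with a Kuratowski-limit argument \cite{Kur68} to place the continuum between the $a_\pm$ manifolds. (Your claim that an orbit leaving through a curved side ``never returns'' also uses monotonicity valid only in $\{y>0,\ 0<x<1\}$, so it does not rule out re-entry through the cap; this, however, is harmless for the method --- strict egress is the real issue.)

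The second gap is the graph property. Area preservation only shows the trapped set has empty interior, and empty interior does not imply single-valuedness over verticals; the supplementary ``monotone dependence on $y$ of the exit alternative'' is asserted, not proved, and is not an evident property of this nonautonomous flow. What actually delivers the graph --- and what explains the specific cap value $a_-^0=\tfrac{1+a_--\sqrt{a_-^2-a_-+1}}{3}$, which appears in your wedge without justification --- is the sign condition $\partial_x\bigl[x(x-1)(x-q(t))\bigr]\ge 0$ for a.e.\ $t\le t_0$ and $x\in[0,a_-^0]$, used as in \cite[Lemma 2.4]{Ure10}: if two distinct backward-trapped solutions had the same abscissa at $t_0$, their difference $w$ would satisfy $w''=c(t)w$ with $c\ge 0$ as long as both stay in the strip, and a convexity (maximum-principle) argument contradicts $w(t_0)=0$ together with $w\to 0$ as $t\to-\infty$. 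Your Riccati slope equation points toward the right hyperbolic structure but is used only heuristically. Finally, note that the paper neither claims nor needs your global assembly: it produces a continuum inside the triangle joining $(0,0)$ to the line $x=a_-$, whereas the compactness you assert for the full set of backward-converging points is unjustified (and unnecessary for the statement).
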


\begin{proof}
We observe that Statement 2 is a consequence of Statement 1 and the symmetry enjoyed by the vector field in \eqref{sysq}
with respect to the axis $ y= 0 $.
Hence we have that $ \Gamma^{i}_{+\infty} = \{ (x,y) : (x,-y) \in \Gamma^{i}_{-\infty}\} $ and $ y^{i}_{+\infty} \equiv -y^{i}_{-\infty} $
for $ i = 0, 1 $.
Concerning the proof of Statement 1, we give only the details about the continuum $\Gamma_{-\infty}^0$, since the existence and the
properties of $ \Gamma_{-\infty}^{1} $ can be deduced in an analogous way.

Let us fix $\epsilon > 0$ such that
$$
0 < a_- -\epsilon < a_+ +\epsilon < 1
$$
and define $\Sigma_\epsilon \subset \mathbb{R}^2$ as the compact triangular region of the first quadrant
of the phase-plane bounded by the unstable manifolds to
$(0,0)$ for the systems $S(a_- -\epsilon)$ and $S(a_+ + \epsilon)$ and the vertical line
$x = a_- - \epsilon$.
\vs{4}
\ni
We first prove that any solution $z(t) = (x(t),y(t))$ which remains in $\Sigma_\epsilon$ for every
$t \leq t_0$ has to satisfy
$$
\lim_{t \to -\infty} z(t) = (0,0).
$$
Indeed, for such a solution we have that, for $t \leq t_0$,
$$
x(t) \in \left( 0, a_- - \epsilon \right], \qquad x'(t) > 0, \qquad x''(t) > 0,
$$
and, hence, there exist
$$
\lim_{t \to -\infty} x(t) = L < a_- \quad \mbox{ and  } \quad
\lim_{t \to -\infty} x'(t) = 0.
$$
Finally, $L$ must be zero; otherwise:
\[
\liminf_{t \to -\infty}x''(t) = \liminf_{ t \to -\infty } \{ x(t) [ 1 - x(t) ][ a(t) - x(t) ] \} \ge L ( 1 - a_{-} ) \epsilon > 0
\]
and, thus, we would have $\lim_{t \to -\infty} x'(t) \neq 0$.
\vs{6}
\ni
Now we write system \eqref{sysq} as an autonomous system in $\mathbb{R}^3$:
\begin{equation}\label{sysr3}
\left\{
\begin{array}{l}
\vspace{0.1cm}
x' = y \\
\vspace{0.1cm}
y' = x (x-1)(x-q(t)) \\
t' = 1
\end{array}
\right.
\end{equation}
and let $\pi(\cdot;s_0,P)$ be the unique solution
to \eqref{sysr3} starting from $ P \in \mathbb{R}^{3}$ at $s = s_0$. We set
$$
W = \Sigma_\epsilon \times (-\infty,t_0], \quad U = \left(\gamma_- \setminus \{(0,0)\} \right) \times (-\infty,t_0],
\quad
V = \left(\gamma_+ \setminus \{(0,0)\} \right) \times (-\infty,t_0],
$$
where $\gamma_-$ and $\gamma_+$ are the portions of the boundary of $\Sigma_\epsilon$ lying on the
unstable manifolds to $(0,0)$ for the systems $S(a_- -\epsilon)$ and $S(a_+ + \epsilon)$
respectively.
\vs{4}
\ni
Let us study the behavior of the vector field
associated with system \eqref{sysr3} at any point
$P = (x_1,y_1,t_1) \in \partial W$.
First, if $(x_1,y_1) = (0,0)$, it is clear that $\pi(s;t_1,P) = (0,0,s)$ for
all $s$. Next, if $P \in U \cup V$, the vector field
points strictly inwards $W$; therefore,
$\pi(s;t_1,P) \notin W$ for all $s$ in a left neighborhood of $t_1$.
Finally, if either
$$
(x_1,y_1) \in \partial\Sigma_\epsilon \setminus \left( \gamma_- \cup \gamma_+ \cup \{(0,0)\}\right)
$$
or
\[
(x_1,y_1) \in \Sigma_\epsilon \setminus \left( \gamma_- \cup \gamma_+ \cup \{(0,0)\}\right)
\quad \text{and} \quad t_{1}=0
\]
then $\pi(s;t_1,P) \in W$ for all $s$ in a left neighborhood of $t_1$, since in those points the vector field of \eqref{sysr3}
points strictly outwards $W$.
\vs{2}
\ni
Now, we consider the set $D \subset W$ given by
$$
D = \{ P = (x_1,y_1,t_1) \in W : \, \exists s < t_1 \,\mbox{s.t.}\, \pi(s;t_1,P) \notin W \}
$$
and the map
$$
\Phi: D \to \partial W
$$
such that $\Phi(P)$ is the first backward exit point from $W$ for the solution of
\eqref{sysr3} starting from the point $P$, namely $ \Phi(P) = \pi( s^{*}; t_{1}, P ) $ where
\[
s^{*} = \sup \{ s < t_{1} : \pi( s; t_{1}, P ) \not \in D \}.
\]
It is proved in \cite{Con75}
that $\Phi$ is continuous on $D$; moreover, by the previous discussion,
$\Phi(D) = U \cup V$ is not connected and $U$ and $V$ are its connected components.
\vs{4}
\ni
Let $\gamma: [0,1] \to \Sigma_\epsilon$ be a continuous path such that
$\gamma(0) \in \gamma_- \setminus \{ (0,0) \}$ and $\gamma(1) \in \gamma_+ \setminus \{ (0,0) \}$.
Then we have that $\Phi(\gamma(0)) \in U$, $\Phi(\gamma(1)) \in V$.
Since $[0,1]$ is connected, there must be $\tau \in (0,1)$ such that
$\gamma(\tau) \notin D$. By the topological lemma \cite[Corollary 6]{RebZan00}
there exists a continuum $\Gamma_{-\infty}^0 \subset \Sigma_\epsilon \setminus D$
such that
$$
(0,0) \in \Gamma_{-\infty}^0 \quad \mbox{ and } \quad
\Gamma_{-\infty}^0 \cap \left( \{a_- - \epsilon \} \times \mathbb{R} \right) \neq \emptyset.
$$
Letting $\epsilon \to 0^+$, it is actually possible to show that
$\Gamma_{-\infty}^0 \subset \Sigma_0$ (that is, it lies between the unstable manifolds to $(0,0)$ and $(1,0)$
of the systems $S(a_-)$ and $S(a_+)$ respectively) and reaches the vertical line $x = a_-$
(see \cite[Theorem 6, \S 47, II, p.171]{Kur68}).
\vs{4}
\ni
The fact that, in a suitable vertical strip, $\Gamma_{-\infty}^0$ can be written as the graph
of a continuous function can be proved as in \cite[Lemma 2.4]{Ure10}, taking into account the fact
that, for
$$
f(t,x) = x(1-x)(x-q(t)) \quad \mbox{ and } \quad a_-^0 = \frac{1+a_- - \sqrt{a_-^2-a_-+1}}{3},
$$
it holds $\partial_x f(t,x) \geq 0$ for a.e. $t \leq t_0$ and $x \in [0,a_-^0].$
\end{proof}

\begin{remark} \label{stablemanif}
\textnormal{We observe that the mere existence of the stable and unstable manifolds
follows from the Stable Manifold Theorem (see\cite{Hal80}), since $(0,0)$
and $(1,0)$ are hyperbolic equilibrium points of system \eqref{sysq}. Here, we have used Wa\.zewski's method
in order to provide, for such sets, a precise localization, which is indeed needed in our arguments.
Notice that in this way we cannot directly claim
that stable/unstable manifolds are curves (indeed, Wa\.zewski's method applies in more general cases in which this is not true);
however, we can recover such an information in an elementary way, using the sign of the nonlinearity as in \cite{Ure10}.}
\end{remark}

\section{Main results}\label{sec4}
\def\theequation{4.\arabic{equation}}\makeatother
\setcounter{equation}{0}

Let us consider the equation
\begin{equation}\label{eqmain}
\epsilon^2 u'' + f(s,u) = 0
\end{equation}
where
\begin{equation}\label{fmodificata}
f(s,u) = \left\{\begin{array}{ll}
\vspace{0.1cm}
u(1-u)(u-a(s)) & \mbox{if } u \in [0,1],
\\
0 & \mbox{if } u \notin [0,1],
\end{array}
\right.
\end{equation}
with $a: \mathbb{R} \to \mathbb{R}$ a locally integrable function satisfying
$$
0 < a(s) < 1, \quad \mbox{ for every } s \in \mathbb{R}.
$$
\begin{remark}\label{fmodif}
\textnormal{We observe that, in view of the boundedness of $f$, any solution to \eqref{eqmain} is globally defined.
Moreover, if a solution $u$ satisfies $u(s^*) \notin (0,1)$ for some $s^* \in \mathbb{R}$,
then $u(s) \notin (0,1)$ either for all $s \leq s^*$ or for all $s \geq s^*$.}
\end{remark}

From now on, we assume
\begin{equation} \label{definizionea}
a(s) = \left\{\begin{array}{ll}
\vspace{0.1cm}
a_- & {\mbox{if $s_{2k} \leq s < s_{2k+1}$}}
\\
a_+ & {\mbox{if $s_{2k+1} \leq s < s_{2k+2}$ }},
\end{array}
\right.
\end{equation}
where $a_-,a_+$ are real constants with
\begin{equation} \label{condizionea}
0 < a_- < \frac{1}{2} < a_+ < 1
\end{equation}
and $(s_k)_{k \in \mathbb{Z}} \subset \mathbb{R}$ is a sequence such that:
\begin{equation}\label{lowbound}
0 < \delta \leq \inf_{ k \in \mathbb{Z} } ( s_{k+1} - s_k )
\end{equation}
and, thus, $s_k < s_{k+1}$ for every $k\in \mathbb{Z}$ and $\lim_{k \to \pm \infty}s_k = \pm \infty$.
For every $j\in \mathbb{Z}$ let us set
$$
I_j = [s_{6(j-1)},s_{6j}], \quad I_j^+ = [s_{6j-5},s_{6j-4}], \quad I_j^- = [s_{6j-2},s_{6j-1}].
$$
In this setting, we will prove the following results.
\smallbreak
The first one, Theorem \ref{chaos},
deals with chaotic solutions.

\begin{Theorem}\label{chaos}
For any integer $M \geq 1$, there exists $\epsilon^* = \epsilon^*(M) > 0$,
such that, for any $\epsilon \in (0,\epsilon^*)$ and for any
double infinite sequence ${\bf n} = \{(n_j^{+},n_j^-)\}_{j\in \mathbb{Z}}$, with $ n_j^{\pm} \in \{ 1, \dots, M \} $
for all $ j \in \mathbb{Z} $, there exists a globally defined solution $ u $ of \eqref{eqmain},
with $0 < u(s) < 1$ for all $s \in \mathbb{R}$, such that its trajectory
$ ( u, u' ) $ makes $n_j^{\pm} $ turns around $ ( a_{\pm}, 0 ) $ in the interval $ I_j^{\pm} $, for all $ j \in \mathbb{Z} $.
Moreover, if the sequence $ ( s_k - s_{k-1} ) $ is $ 6 $-periodic
and the sequence $ {\bf n}$ is $ \ell $-periodic for some $\ell \in \mathbb{N}$, then this solution $ u $ is
$\ell(s_6-s_0)$-periodic.
\end{Theorem}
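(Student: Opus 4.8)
The plan is to pass to the equivalent slowly–varying planar system, to read the prescribed nodal data $\mathbf{n}$ as an itinerary through the rectangles $\mathcal{R}_1,\dots,\mathcal{R}_4$, and then to feed the stretching relations of Proposition \ref{stretching} into the SAP machinery recalled in the Appendix. First I would set $x(t)=u(\epsilon t)$, so that \eqref{eqmain}--\eqref{fmodificata} turns into \eqref{sysintro2}, a system that on each rescaled interval $[s_k/\epsilon,s_{k+1}/\epsilon)$ coincides with $S(a_-)$ or $S(a_+)$ according to \eqref{definizionea}, and I would let $\Psi_\pm^{T}$ be the corresponding Poincar\'e maps of Section \ref{stre}. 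By \eqref{lowbound} every such interval has rescaled length at least $\delta/\epsilon$, so I would choose
$$
\epsilon^*(M)=\frac{\delta}{\max\{T_1^*,\,T_2^*(M)\}};
$$
then, for every $\epsilon\in(0,\epsilon^*(M))$, all the lengths $(s_{k+1}-s_k)/\epsilon$ simultaneously exceed the thresholds $T_1^*$ and $T_2^*(M)$ of Proposition \ref{stretching}, uniformly in $k$ and in the admissible turn numbers $n_j^\pm\le M$.

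Next I would analyse a single block $I_j=[s_{6(j-1)},s_{6j}]$, on which, by \eqref{definizionea}, $a$ runs through $a_-,a_+,a_-,a_+,a_-,a_+$ on the six consecutive subintervals, the second being $I_j^+$ and the fifth being $I_j^-$. Composing the six Poincar\'e maps reproduces precisely the chain of Proposition \ref{stretching},
$$
(\mathcal{R}_1,\mathcal{R}_1^-)\xrightarrow{\,\Psi_-\,}(\mathcal{R}_2,\mathcal{R}_2^-)\xrightarrow{\,\Psi_+\,}(\mathcal{R}_2,\mathcal{R}_2^+)\xrightarrow{\,\Psi_-\,}(\mathcal{R}_3,\mathcal{R}_3^-)
$$
$$
\xrightarrow{\,\Psi_+\,}(\mathcal{R}_4,\mathcal{R}_4^-)\xrightarrow{\,\Psi_-\,}(\mathcal{R}_4,\mathcal{R}_4^+)\xrightarrow{\,\Psi_+\,}(\mathcal{R}_1,\mathcal{R}_1^-),
$$
whose orientations match at every arrow, so the block map sends $(\mathcal{R}_1,\mathcal{R}_1^-)$ back onto itself. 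The second and fifth arrows (those on $I_j^+$ and $I_j^-$) are the multiple stretchings $\stretchx^{M}$ of point~2 of Proposition \ref{stretching}, while the other four are the simple stretchings $\stretchx$ of point~1. Crucially, by the construction of the sets $\mathcal{H}_1,\dots,\mathcal{H}_M$ in that proof and by Remark \ref{nodali}, selecting the $i$-th branch of such a multiple stretching forces the solution to make \emph{exactly} $i$ turns about the corresponding centre $(a_\pm,0)$.

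I would then concatenate these blocks over all $j\in\mathbb{Z}$, obtaining a bi-infinite sequence of oriented rectangles and stretching maps, and at the turning arrow of $I_j^+$ (resp.\ $I_j^-$) I would prescribe the branch of index $n_j^+$ (resp.\ $n_j^-$). The itinerary-realization theorem of the SAP method then produces a point $w_0\in\mathcal{R}_1$ whose entire forward and backward trajectory under the $\Psi_\pm$ visits the prescribed rectangles along the prescribed branches; back in the original variable, $u(s)=x(s/\epsilon)$ is a globally defined solution of \eqref{eqmain} making exactly $n_j^\pm$ turns around $(a_\pm,0)$ on each $I_j^\pm$. Since this trajectory returns to $\mathcal{R}_1\subset(0,1)\times\mathbb{R}$ infinitely often both as $s\to+\infty$ and as $s\to-\infty$, it can never reach a value outside $(0,1)$ without escaping monotonically (Remark \ref{fmodif}); hence $0<u(s)<1$ everywhere.

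Finally, if $(s_k-s_{k-1})$ is $6$-periodic and $\mathbf{n}$ is $\ell$-periodic, the weight $a_\epsilon$ is $((s_6-s_0)/\epsilon)$-periodic and the block maps repeat with period $\ell$, so the composition over $\ell$ consecutive blocks is a self-stretching $(\mathcal{R}_1,\mathcal{R}_1^-)\stretchx^{m}(\mathcal{R}_1,\mathcal{R}_1^-)$ with $m=\prod_{j=1}^{\ell}n_j^+ n_j^-$; the fixed-point part of the SAP method then gives a fixed point of the $\ell$-block Poincar\'e map in the prescribed branch, whose solution is $\ell(s_6-s_0)$-periodic. I expect the genuine difficulty to lie in this bi-infinite gluing, namely producing a \emph{single} initial point whose orbit is defined for all times and follows the prescribed itinerary with \emph{exact} (not merely minimal) turn counts: this is exactly where the combination of the SAP realization theorem with the precise branch description of Remark \ref{nodali} is essential, whereas the orientation bookkeeping along the six-fold chain and the uniform choice of $\epsilon^*(M)$ are, by contrast, routine.
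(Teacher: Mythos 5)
Your proposal is correct and follows essentially the same route as the paper's proof: the same threshold $\epsilon^*(M)=\delta/\max\{T_1^*,T_2^*(M)\}$, the same six-fold block map through $(\mathcal{R}_1,\mathcal{R}_1^-)\to(\mathcal{R}_2,\mathcal{R}_2^-)\to(\mathcal{R}_2,\mathcal{R}_2^+)\to(\mathcal{R}_3,\mathcal{R}_3^-)\to(\mathcal{R}_4,\mathcal{R}_4^-)\to(\mathcal{R}_4,\mathcal{R}_4^+)\to(\mathcal{R}_1,\mathcal{R}_1^-)$ composed via Proposition \ref{composizione} into a self-stretching of $(\mathcal{R}_1,\mathcal{R}_1^-)$ with crossing number $M^2$, the same appeal to Theorem \ref{thchaos} (including its fixed-point clause for the periodic case) together with Remark \ref{nodali} for the exact turn counts, and the same confinement argument via Remark \ref{fmodif}. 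Only one cosmetic slip: the crossing number of the $\ell$-block composition is $M^{2\ell}$ (or $1$ once you select a single branch $\mathcal{H}_{n_j^{\pm}}$ per block), not $\prod_{j=1}^{\ell}n_j^+n_j^-$, but this is immaterial since the second bullet of Theorem \ref{thchaos} directly produces the fixed point within the prescribed branches.
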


Notice that the sentence ``$n_j^{\pm}$ turns around $ ( a_{\pm}, 0 ) $'' in the above statement has to be meant according to Remark \ref{nodali};
the same terminology will be used in Theorems \ref{eter} and \ref{omo} below, dealing with heteroclinic and homoclinic solutions, respectively.

\begin{Theorem}\label{eter}
For any integer $M \geq 1,$ there exists $\epsilon^* = \epsilon^*(M) > 0$,
such that, for any $\epsilon \in (0,\epsilon^*)$, for any integer
$K \geq 0$ and for any ${\bf n} = \{(n_j^{+},n_j^-)\}_{j=1,\ldots,K}$ (this $K$-uple is considered to be empty
if $K = 0$) with  $ n_j^{\pm} \in \{ 1, \dots, M \} $
for all $ j =1,\ldots,k$, there exists a globally defined solution $u$ of
\eqref{eqmain}, with $0 < u(s) < 1$ for all $s \in \mathbb{R}$, such that:
\begin{enumerate}
\item $(u(-\infty),u'(-\infty)) = (0,0)$,
\item $(u(+\infty),u'(+\infty)) = (1,0)$,
\item the trajectory
$ ( u, u' ) $ makes $n_j^{\pm} $ turns around $ ( a_{\pm}, 0 ) $ in the interval $ I_j^{\pm} $, for all $ j =1,\ldots,K $,
\item $u$ is monotone in $(-\infty,s_0]$ and in $[s_{6K},+\infty)$.
\end{enumerate}
\end{Theorem}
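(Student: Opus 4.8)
The plan is to read the heteroclinic as a solution of the rescaled problem that lies on the unstable manifold of $(0,0)$ on its left tail, threads a prescribed finite itinerary of the topological rectangles on the bounded window $[s_0,s_{6K}]$, and lies on the stable manifold of $(1,0)$ on its right tail. Setting $x(t)=u(\epsilon t)$ turns \eqref{eqmain} into \eqref{sysq} with $q(t)=a(\epsilon t)$, a piecewise constant weight which alternates between $a_-$ and $a_+$ on the intervals $[s_k/\epsilon,s_{k+1}/\epsilon)$, each of length at least $\delta/\epsilon$ by \eqref{lowbound}. I would then choose $\epsilon^*(M)=\delta/\max\{T_1^*,T_2^*(M)\}$, so that for $\epsilon<\epsilon^*$ every such piece exceeds the corresponding threshold of Proposition \ref{stretching} (recall that $T_2^*(N)$ is increasing in $N$ and $n_j^\pm\le M$); thus all the stretching relations there are available, with a single fixed choice of rectangles, on each piece.

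First I would build the backbone on the window $[s_0/\epsilon,s_{6K}/\epsilon]$. On each block $I_j$ the six consecutive pieces carry the weights $a_-,a_+,a_-,a_+,a_-,a_+$, which is exactly the pattern needed to concatenate the relations of Proposition \ref{stretching}: $(\mathcal{R}_1,\mathcal{R}_1^-)\stretchx(\mathcal{R}_2,\mathcal{R}_2^-)$ under $\Psi_-$, then $(\mathcal{R}_2,\mathcal{R}_2^-)\stretchx^{n_j^+}(\mathcal{R}_2,\mathcal{R}_2^+)$ under $\Psi_+$, then $(\mathcal{R}_2,\mathcal{R}_2^+)\stretchx(\mathcal{R}_3,\mathcal{R}_3^-)\stretchx(\mathcal{R}_4,\mathcal{R}_4^-)$, then $(\mathcal{R}_4,\mathcal{R}_4^-)\stretchx^{n_j^-}(\mathcal{R}_4,\mathcal{R}_4^+)$ under $\Psi_-$, and finally $(\mathcal{R}_4,\mathcal{R}_4^+)\stretchx(\mathcal{R}_1,\mathcal{R}_1^-)$. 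The two $\stretchx^{N}$ steps, realized over $I_j^+$ and $I_j^-$, produce exactly $n_j^+$ (resp.\ $n_j^-$) turns around $(a_+,0)$ (resp.\ $(a_-,0)$) in the sense of Remark \ref{nodali}. Composing the $K$ blocks yields a finite chain of stretching-along-paths relations that begins and ends on $(\mathcal{R}_1,\mathcal{R}_1^-)$, exactly as in the proof of Theorem \ref{chaos} restricted to a finite window.

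Then I would attach the two tails via Proposition \ref{waze}: with $t_0=s_0/\epsilon$, Statement 1 yields the unstable continuum $\Gamma^0_{-\infty}$ through $(0,0)$, lying in the first quadrant with $x\le a_-$ and reaching $x=a_-$; with $t_0=s_{6K}/\epsilon$, Statement 2 yields the stable continuum $\Gamma^1_{+\infty}$ through $(1,0)$, lying in $\{x\ge a_+,\,y>0\}$ and reaching $x=a_+$. The gluing follows the SAP-with-caps scheme of the cited works: $\Gamma^0_{-\infty}$, propagated forward across the first piece, provides an initial path that crosses $\mathcal{R}_1$ between the two components of $\mathcal{R}_1^-$; the stretching chain then delivers, via the topological lemma \cite[Corollary 6]{RebZan00}, a sub-continuum of initial data at $s_0/\epsilon$ whose orbit threads all the rectangles and whose endpoint at $s_{6K}/\epsilon$ again crosses $\mathcal{R}_1$. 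Since $\Gamma^1_{+\infty}$ separates that terminal crossing into orbits converging to $(1,0)$ and orbits that do not, the crossing must meet $\Gamma^1_{+\infty}$, selecting the desired initial condition. The resulting $u$ then satisfies $(u,u')(-\infty)=(0,0)$ and $(u,u')(+\infty)=(1,0)$; that $0<u<1$ follows from Remark \ref{fmodif} together with the orbit remaining in $[0,1]\times\mathbb{R}$, while monotonicity on the two tails follows from the localizations above, since $x'=y$ keeps a constant sign on $\Gamma^0_{-\infty}$ ($x\le a_-$, $y>0$) and on $\Gamma^1_{+\infty}$ ($x\ge a_+$, $y>0$). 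The degenerate case $K=0$ is the direct connection of the two tails with empty itinerary, handled by the same crossing argument through the first forward piece.

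The hard part will be precisely this gluing, i.e.\ the geometric compatibility between the Wa\.zewski continua and the oriented rectangles. One must verify that $\Gamma^0_{-\infty}$ genuinely induces a crossing path of $\mathcal{R}_1$ between the two components of $\mathcal{R}_1^-$ (so the chain can be started), and dually that $\Gamma^1_{+\infty}$ is transverse to the terminal crossing (so that the forward limit is pinned to $(1,0)$ rather than to $(0,0)$). This rests on the precise localization of the manifolds between the autonomous stable/unstable manifolds and on their graph form near the equilibria provided by Proposition \ref{waze}; once this is established, the SAP machinery together with \cite[Corollary 6]{RebZan00} yields the connecting orbit and, simultaneously, the asserted nodal and monotonicity properties.
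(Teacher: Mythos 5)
Your overall architecture coincides with the paper's (rescale, run the stretching chain of Proposition \ref{stretching} over the finite window, cap the two ends with the Wa\.zewski continua of Proposition \ref{waze}), but the gluing --- which you yourself flag as ``the hard part'' --- is left unresolved, and in two places your version would actually fail. First, your threshold $\epsilon^*(M)=\delta/\max\{T_1^*,T_2^*(M)\}$ is the one appropriate for Theorem \ref{chaos} but is too weak here: Proposition \ref{waze} localizes $\Gamma^0_{-\infty}$ only up to the vertical line $x=a_-$ (and in graph form only on $[0,a_-^0]$), so this continuum does not cross $\mathcal{R}_1$ at all, let alone join the two components of $\mathcal{R}_1^-$. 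The paper manufactures the crossing by anchoring the continuum at $t_{-1}$ (not at $t_0=s_0/\epsilon$, as you do) and propagating it forward through the $a_+$ piece $[t_{-1},t_0]$: the origin stays fixed under $\Psi_+^{t_0-t_{-1}}$, while the endpoint at abscissa $x_1=\min\{x^*,a_-^0\}$ is swept past $x=1$ along orbits outside $\Theta(a_+,(p_+,0))$, using the monotonicity of the time-map $c\mapsto\int_{x_1}^1 dx/\sqrt{c-F_{a_+}(x)}$ --- and this works only if the piece lasts longer than the transit time $\tau$, which is why the paper's definition \eqref{epsm2} reads $\epsilon^*(M)=\delta/\max\{T_1^*,T_2^*(M),\tau,\tau'\}$. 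Note also that with the weight pattern \eqref{definizionea} the piece starting at $s_0$ carries $a_-$ (each $\phi_k$ begins with $\Psi_-$), so anchoring at $t_0$ leaves you no $a_+$ piece with which to perform this sweep: the chain cannot be started from your initial path, and for $\epsilon$ between the paper's threshold and yours the claimed crossing can genuinely fail.

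Second, your terminal step --- ``since $\Gamma^1_{+\infty}$ separates the terminal crossing into orbits converging to $(1,0)$ and orbits that do not, the crossing must meet $\Gamma^1_{+\infty}$'' --- is not justified: $\Gamma^1_{+\infty}$ is merely a continuum localized between the stable manifolds of $S(a_-)$ and $S(a_+)$, and nothing in Proposition \ref{waze} makes it a separating set in the region where the image of your chain lands. The paper avoids any separation claim: it anchors the stable continuum at $t_{6K+1}$, pulls it back by $(\Psi_-^{t_{6K+1}-t_{6K}})^{-1}$ across the $a_-$ piece (again of length exceeding the transit time $\tau'$) into a path crossing $\mathcal{R}_1$ between the two components of $\mathcal{R}_1^+$, and then intersects the two transversally crossing paths via the elementary lemma \cite[Lemma 3]{MulWil74} --- not via \cite[Corollary 6]{RebZan00}, which is used only inside the Wa\.zewski construction and does not perform this intersection. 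This pull-back also delivers point 4 of the statement ($y>0$ on $[t_{6K},t_{6K+1}]$ and then along the graph piece of $\Gamma^1_{+\infty}$, whence monotonicity of $u$ on $[s_{6K},+\infty)$), which under your anchoring at $t_{6K}$ would require a separate argument. In short: right skeleton, but the two quantitative ingredients ($\tau$, $\tau'$ in $\epsilon^*$ with the shifted anchor points $t_{-1}$, $t_{6K+1}$) and the correct topological intersection tool are missing, and they are exactly what makes the theorem true.
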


\begin{Theorem}\label{omo}
For any integer $M \geq 1,$ there exists $\epsilon^* = \epsilon^*(M) > 0$,
such that, for any $\epsilon \in (0,\epsilon^*)$, for any integer
$K \geq 0$ and for any ${\bf n} = \{(n_j^{+},n_j^-)\}_{j=1,\ldots,K}$ (this $K$-uple is considered to be empty
if $K = 0$) with  $ n_j^{\pm} \in \{ 1, \dots, M \} $
for all $ j =1,\ldots,k$, there exists a globally defined solution $u$ of
\eqref{eqmain},
with $0 < u(s) < 1$ for all $s \in \mathbb{R}$, such that:
\begin{enumerate}
\item $(u(-\infty),u'(-\infty)) = (0,0)$,
\item $(u(+\infty),u'(+\infty)) = (0,0)$,
\item the trajectory
$ ( u, u' ) $ makes $n_j^{\pm} $ turns around $ ( a_{\pm}, 0 ) $ in the interval $ I_j^{\pm} $, for all $ j =1,\ldots,K $,
\item $u$ is monotone in $(-\infty,s_0]$ and in $[s_{6K+1},+\infty)$ and $u'$ vanishes exactly once
in $(s_{6K},s_{6K+1})$.
\end{enumerate}
\end{Theorem}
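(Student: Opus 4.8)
The plan is to construct the homoclinic solution by combining the stretching machinery of Proposition \ref{stretching} on the bounded ''chaotic'' part with the Conley--Wa\.zewski localization of Proposition \ref{waze} for the two unbounded tails, exactly as one does for Theorem \ref{eter}, the only difference being that both tails must now connect to the \emph{same} equilibrium $(0,0)$. Fix $M\geq1$, an integer $K\geq0$ and a $K$-uple ${\bf n}$. As in the heteroclinic case, I would work in the rescaled variable $x(t)=u(\epsilon t)$, so that on each maximal interval where $a$ is constant the trajectory obeys one of the autonomous systems $S(a_\pm)$; the smallness of $\epsilon$ guarantees, via $T_i=(s_{k+1}-s_k)/\epsilon\to+\infty$ and the lower bound \eqref{lowbound}, that every such interval is longer than the thresholds $T_1^*$ and $T_2^*(M)$ required by Proposition \ref{stretching}. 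On the central block $I_1\cup\cdots\cup I_K$ I would iterate the stretching relations of Proposition \ref{stretching} along the cyclic chain $\mathcal{R}_1\to\mathcal{R}_2\to\mathcal{R}_3\to\mathcal{R}_4\to\mathcal{R}_1$, inserting on the $\mathcal{R}_2$ and $\mathcal{R}_4$ passages the $N$-fold stretching of part 2 with $N=n_j^\pm$, so that the resulting composite Poincar\'e map stretches a path across each rectangle the prescribed number of times. This yields, via the SAP covering lemma recalled in the Appendix, a point whose forward orbit over $[s_0,s_{6K}]$ realizes exactly the nodal data ${\bf n}$ and lands in a prescribed rectangle at the right endpoint.

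The genuinely new ingredient is the treatment of the two tails so that the solution is homoclinic rather than heteroclinic. For $s\le s_0$ the weight $a$ is bounded between $a_-$ and $a_+$, so Proposition \ref{waze}(1) furnishes the continuum $\Gamma^0_{-\infty}$ of initial conditions whose backward orbit tends to $(0,0)$; for $s\ge s_{6K}$ the point is that I do \emph{not} want the forward orbit to escape toward $(1,0)$, but rather to return to $(0,0)$. The device is to let the solution cross the positive $x$-axis exactly once just after $s_{6K}$ and then ride down the \emph{stable} manifold of $(0,0)$: concretely, I would exploit item \eqref{a=12}-type monotonicity and apply Proposition \ref{waze}(2) to obtain the continuum $\Gamma^0_{+\infty}$ of initial data whose forward orbit converges to $(0,0)$, and I would arrange the last rectangle of the central chain to be linked with $\Gamma^0_{+\infty}$ rather than with the stable manifold of $(1,0)$. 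This is the structural reason for conclusion (4): after the final turn the trajectory must make a single half-loop (so $u'$ vanishes exactly once in $(s_{6K},s_{6K+1})$) in order to be repositioned on the decreasing branch from which it can converge monotonically to $(0,0)$ as $s\to+\infty$.

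The matching is then a standard SAP-plus-Wa\.zewski intersection argument. I would consider the two-dimensional family of paths in the domain rectangle $\mathcal{R}_1$ (or whichever rectangle opens the chain) whose one endpoint sweeps across $\Gamma^0_{-\infty}$; the stretching composition carries such a path across the central block and the extra half-loop so that its image sweeps across $\Gamma^0_{+\infty}$. Because $\Gamma^0_{-\infty}$ and $\Gamma^0_{+\infty}$ are continua joining the two horizontal sides of their respective localizing strips (the graph descriptions in Proposition \ref{waze} guarantee they are genuine graphs over $[0,a_-^0]$ and hence cross the rectangles transversally), a connectedness/intersection argument produces a point lying simultaneously on $\Gamma^0_{-\infty}$, on the correct sequence of rectangles, and whose image lies on $\Gamma^0_{+\infty}$. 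Unwinding the rescaling, the corresponding $u$ satisfies $0<u<1$ everywhere, converges to $(0,0)$ at both ends, makes the prescribed turns, and is monotone on the two tails.

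The main obstacle, and the step deserving the most care, is the construction and insertion of the single half-loop near $s_{6K}$ that turns a would-be heteroclinic connection into a homoclinic one: I must verify that after completing $n_K^\pm$ full turns the trajectory can be made to cross the $x$-axis exactly once and then fall onto $\Gamma^0_{+\infty}$, all while staying inside $(0,1)$ in the $x$-component and while the stretching relations remain compatible with the required orientation (note that $\mathcal{R}_2$ is traversed with opposite orientations as domain and target, and the final linkage with $\Gamma^0_{+\infty}$ must respect this). Establishing that this half-loop exists for all small $\epsilon$, and that it forces $u'$ to vanish \emph{exactly} once rather than an even number of additional times, is where the sign analysis of $f_a$ on $[0,1]$ and the precise localization of the stable manifold from Proposition \ref{waze} must be combined; everything else follows the template of Theorem \ref{eter}, and in the spirit of Remark \ref{linkinginterno} I would omit the parts identical to that proof.
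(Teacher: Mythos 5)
Your proposal follows essentially the same route as the paper: the paper's own proof of Theorem \ref{omo} is a one-line reduction to the argument of Theorem \ref{eter}, replacing the stable continuum $\Gamma_{+\infty}^1$ of $(1,0)$ by $\Gamma_{+\infty}^0$ of $(0,0)$ from Proposition \ref{waze}, i.e.\ stretching the image of $\Gamma_{-\infty}^0$ across the chain of rectangles and intersecting it with the pullback of $\Gamma_{+\infty}^0$ via the topological lemma of Muldowney--Willett, which is exactly your SAP-plus-Wa\.zewski matching. Your discussion of the single half-loop after $s_{6K}$ (the forward-stable continuum lies in the lower half-plane near $(0,0)$ while the trajectory leaves the last rectangle in the upper one, forcing $u'$ to vanish exactly once in $(s_{6K},s_{6K+1})$) correctly makes explicit the structural point the paper leaves implicit.
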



\subsection{Proof of the main results}

In order to prove the above theorems, we perform the change of variable
$$
x(t) = u(\epsilon t), \quad t \in \mathbb{R},
$$
converting the equation \eqref{eqmain} into
\[
x'' + f(\epsilon t , x ) = 0,
\]
with
$$
f(\epsilon t, x ) = \begin{cases}
x (1-x)(x-a_{-} ) & \text{if } x \in [ 0, 1 ] \text{ and } t \in \left[ t_{2k}, t_{2k+1} \right), k \in \mathbb{Z} \\
x (1-x)(x-a_{+} ) & \text{if } x \in [ 0, 1 ] \text{ and } t \in \left[ t_{2k+1}, t_{2k+2} \right), k \in \mathbb{Z} \\
0                 & \text{if } x \not\in [ 0, 1 ]
\end{cases}
$$
where we have set
$$
t_k = \frac{s_k}{\epsilon}, \quad \mbox{ for every } k \in \mathbb{Z}.
$$
Notice that, in view of \eqref{lowbound}, we now have
$$
0 < \frac{\delta}{\epsilon} \leq \inf_{k\in\mathbb{Z}} ( t_{k+1} - t_k ).
$$
Let us define, for any $k \in \mathbb{Z}$
$$
\mathcal{D}_k = \mathcal{P}_k = \mathcal{R}_1 \quad \mbox{ and } \quad \widetilde{\mathcal{P}_k} = (\mathcal{R}_1,\mathcal{R}_1^-)
$$
and
\begin{align*}
\phi_k = & \, \Psi_+^{t_{6k+6}-t_{6k+5}}   \circ \Psi_-^{t_{6k+5}-t_{6k+4}} \circ
\Psi_+^{t_{6k+4}-t_{6k+3}} \circ \Psi_-^{t_{6k+3}-t_{6k+2}} \circ \\
& \circ \Psi_+^{t_{6k+2}-t_{6k+1}}  \circ \Psi_-^{t_{6k+1}-t_{6k}}.
\end{align*}
In other words, $ \Phi_{k}(p) = ( x( t_{6k+6} ), x'( t_{6k+6} ) ) $ where $ x $ is the solution of $ x'' + f( \epsilon t, x ) = 0 $
with $ ( x( t_{6k} ), x'( t_{6k} ) ) = p $.
\begin{proof}[Proof of Theorem \ref{chaos}]
Given $M \geq 1$, let us define $\epsilon^*(M) > 0$ as
\begin{equation}\label{epsm}
\epsilon^*(M)  = \frac{\delta}{\max\{T_1^*,T_2^*(M)\}},
\end{equation}
where $T_1^*,T_2^*(M)$ are given by Proposition \ref{stretching}.
Now we take $\epsilon \in (0,\epsilon^*)$; notice that we have
$$
t_{k+1} - t_k > \max\{T_1^*,T_2^*(M)\}, \quad \mbox{ for every } k\in \mathbb{Z}.
$$
In view of Propositions \eqref{stretching} and \eqref{composizione}, we have
$$
\phi_k: \widetilde{\mathcal{P}_k} \,\stretchx^{N^2}\, \widetilde{\mathcal{P}_k}.
$$
The conclusion thus follows from Theorem \ref{thchaos} in the Appendix. We finally notice that
$x(t_{6k}) \in (0,1)$ for all $k \in \mathbb{Z}$, so that, in view of Remark \ref{fmodif},
$x(t) \in (0,1)$ for all $t \in \mathbb{R}$.
\end{proof}

\begin{proof}[Proof of Theorem \ref{eter}]
We begin by applying Proposition \ref{waze} on the intervals $ \left( -\infty, t_{-1} \right] $
and $ \left[ t_{6k+1}, +\infty \right) $
in order to find the continuous functions $ y_{-\infty}^{0} : [ 0, a_{-}^{0} ] \to \mathbb{R}^{+} $ and
$ y_{+\infty}^{1} : [ a_{+}^{1}, 1 ] \to \mathbb{R}^{+} $ such that $ ( x, y_{-\infty}^{0}(x) ) \in \Gamma_{-\infty}^{0} $ and
$ ( x, y_{+\infty}^{1}(x) ) \in \Gamma_{+\infty}^{1} $ for all suitable $ x $.
We denote by $x^*$ the abscissa of the intersection point in the first quadrant between the
homoclinic to $(0,0)$ for the system $S(a_-)$ and the orbit $\Theta(a_+,(p_+,0))$ of system
$S(a^+)$. Let
$$
x_1 = \min\{x^*,a_-^0\}.
$$
The time needed by a solution of system $S(a_+)$ to run along
$\Theta(a_+,(p_+,0))$ from $x=x_1$ to $x=1$ is given by
$$
\tau = \frac{1}{\sqrt{2}}\int_{x_1}^1 \frac{dx}{\sqrt{F_{a_+}(p_+)-F_{a_+}(x)}}.
$$
In a similar way, we take $x^{**}$ as the abscissa of the intersection point in the first quadrant between the
homoclinic to $(1,0)$ for the system $S(a_+)$ and the orbit $\Theta(a_-,(p^-,0))$ of system
$S(a^-)$. Let
$$
x_2 = \max\{x^{**},a_+^1\}.
$$
The time needed by a solution of system $S(a_-)$ to go on
$\Theta(a_-,(p^-,0))$ from $x=0$ to $x=x_2$ is given by
$$
\tau' = \frac{1}{\sqrt{2}}\int_{0}^{x_2} \frac{dx}{\sqrt{F_{a_-}(p^-)-F_{a_-}(x)}}.
$$
For any integer $M \geq 1$, we can thus define
\begin{equation}\label{epsm2}
\epsilon^*(M)  = \frac{\delta}{\max\{T_1^*,T_2^*(M),\tau,\tau'\}},
\end{equation}
where again $ T_{1}^{*} $ and $ T_{2}^{*}(M) $ are given by Proposition \ref{stretching}.
With these positions, we have that
$$
\Psi_+^{t_0-t_{-1}}(\Gamma_{-\infty}^0 \cap ([0,x_1] \times \mathbb{R}))
$$
is a path which crosses $\mathcal{R}_1$ connecting the two components of $\mathcal{R}_1^-$.
\vs{2}
\ni
Indeed, this is a consequence of the fact that $(0,0)$ is a fixed point for
$\Psi_+^{t_0-t_{-1}}$ and that the point $(x_1,y_1) \in \Gamma_{-\infty}^0$ is mapped
by $\Psi_+^{t_0-t_{-1}}$ in the half-plane $\{ x \geq 1\}$ which follows from the choice of $\tau$ and the monotonicity of the map
\[
c \mapsto \int_{x_1}^1 \frac{dx}{\sqrt{c-F_{a_+}(x)}}.
\]
Analogously (by the choice of $\tau'$)
$$
(\Psi_-^{t_{6k+1}-t_{6k}})^{-1}(\Gamma_{+\infty}^1 \cap ([0,x_1] \times \mathbb{R}))
$$
is a path which crosses $\mathcal{R}_1$ connecting the two components of $\mathcal{R}_1^+$.
\vs{4}
\ni
Now, corresponding to the choice of ${\bf n} = \{(n_j^{+},n_j^-)\}_{j=1,\ldots,K}$ with $ n_j^{\pm} \in \{ 1, \dots, M \} $,
there exists a sub-path $\gamma$ of $\Psi_+^{t_0-t_{-1}}(\Gamma_{-\infty}^0)$ which is stretched across
$(\mathcal{R}_1,\mathcal{R}_1^-)$ by the map $\phi_k \circ \phi_{k-1} \circ \ldots \circ \phi_1$.
\vs{4}
\ni
By the topological lemma \cite[Lemma 3]{MulWil74} the intersection
$$
(\phi_k \circ \phi_{k-1} \circ \ldots \circ \phi_1)(\gamma) \cap (\Psi_-^{t_{6k+1}-t_{6k}})^{-1}(\Gamma_{+\infty}^1)
$$
is not empty and any point in this intersection gives rise to the required solution.
\end{proof}

\begin{proof}[Proof of Theorem \ref{omo}]
One can argue in a similar way as in the proof of Theorem \ref{eter},
using here the curves $\Gamma_{-\infty}^0$ and $\Gamma_{+\infty}^0$ given in Lemma \ref{waze}.
\end{proof}

\begin{remark}\label{stabile}
\textnormal{We notice that the statements of our results can be modified (in some cases) to cover also
more general equations $\epsilon^2 u'' + u(1-u)(u-\tilde{a}(t)) = 0$ with $\tilde a$ close to a stepwise function. For instance,
in the setting of Theorems \ref{eter} and \ref{omo} and given integers $M, K$ and $\epsilon \in (0,\epsilon^*(M))$,
the existence result still holds for all functions $\tilde{a}$ with $L^1$-norm on $[0,s_{6K}]$ smaller than
a constant $\delta = \delta(M,K,\epsilon)$ (compare with \cite[Remark 4.1]{MarRebZan10} for more details on
the stability of the stretching technique, and recall that Lemma \ref{waze} about the existence of stable/unstable manifolds is
indeed proved for a more general, non-stepwise, function).
We also observe that the value $\epsilon^*(M)$ is an explicit constant (see \eqref{epsm} and \eqref{epsm2}).}
\end{remark}

\begin{remark}\label{linkinginterno}
\textnormal{We collect here some hints for possible variants and generalizations of our results.
\begin{itemize}
\item The role of the equilibria $(0,0)$ and $(1,0)$ may be switched in Theorems \ref{eter} and \ref{omo}.
More precisely, we can provide also solutions homoclinic to $(1,0)$ as well as heteroclinic solutions
converging to $(1,0)$ for $t \to -\infty$ and to $(0,0)$ for $t \to +\infty$. Moreover, the arguments used in the proof
of Theorem \ref{eter} can be easily modified in order to find multiple solutions for Sturm-Liouville like boundary values problems
(e.g., Dirichlet and Neumann ones).
\item According to the discussion in Remark \ref{LTM}, it is possible to produce a further family of chaotic solutions
when condition \eqref{link} is satisfied.
\item In our main results we have considered functions of the form \eqref{definizionea} with $a_-$ and $a_+$ satisfying condition \eqref{condizionea},
suggested by the investigation in \cite{AngMalPel87}. When either $a_+<1/2$ or $a_->1/2$, the superposition
of the phase-portraits of the systems $S(a_+)$ and $S(a_-)$ gives rise to a different configuration (see Figure \ref{zaza})
analogous to the one considered in the paper \cite{ZaZa-14}, where a Schr\"{o}dinger equation is studied. As a consequence,
by combining the arguments therein together with Lemma \ref{waze},
it is possible to obtain the existence of chaotic dynamics and homoclinic orbits.
\end{itemize}}
\end{remark}

\begin{figure}[!h]
\centering
\includegraphics[height=8cm,width=11cm]{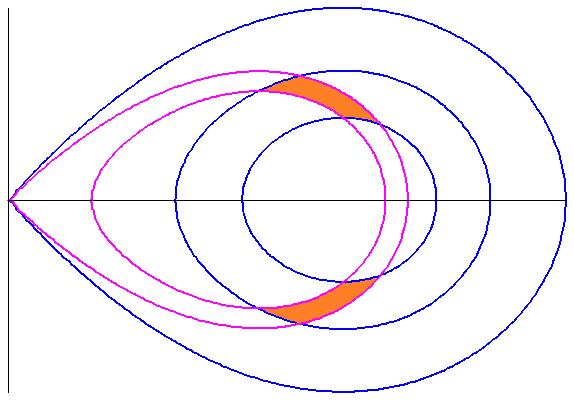}
\caption{\small{The superposition
of the phase-portraits of the systems $S(a_-)$ and $S(a_+)$, with $a_- = 0.3$ and
$a_+ = 0.4$. Orbits of the system $S(a_-)$ are painted in pink, orbits of the system $S(a_+)$ are painted in blue,
topological rectangles verifying stretching properties are painted in orange.
Compare with Figure 2 in \cite{ZaZa-14}.}}
\label{zaza}
\end{figure}

\section{Appendix: SAP method}
\def\theequation{5.\arabic{equation}}
\makeatother\setcounter{equation}{0}

In this appendix we collect the definitions and results on the Stretching Along Paths method which are needed in our paper.
We refer to \cite{Bur14,PasPirZan08} for a comprehensive presentation of the theory and further references.
\medbreak
By a path $\gamma$ in
$\mathbb{R}^2$ we mean a continuous mapping $\gamma:[0,1] \to
\mathbb{R}^2$, while by a sub-path $\sigma$ of $\gamma$ we just
mean the restriction of $\gamma$ to a compact subinterval of
$[0,1]$. By an \emph{oriented rectangle} we mean a pair
$$
\mathcal{\widetilde R} = (\mathcal{R},\mathcal{R}^-),
$$
being $\mathcal{R} \subset \mathbb{R}^2$ homeomorphic to $[0,1]^2$
(namely, a \textit{topological rectangle}) and
$$
\mathcal{R}^- = \mathcal{R}^-_1 \cup \mathcal{R}^-_2
$$
the disjoint union of two compact arcs (by definition, a compact
arc is a homeomorphic image of $[0,1]$) $\mathcal{R}^-_1,
\mathcal{R}^-_2 \subset \partial \mathcal{R}$.
\smallbreak
With these preliminaries, we can give the following definition.

\begin{Definition}\label{stretchdef}
Let $\mathcal{\widetilde A} = (\mathcal{A},\mathcal{A}^-)$,
$\mathcal{\widetilde B} = (\mathcal{B},\mathcal{B}^-)$ be oriented
rectangles and let $\Psi: \mathcal{D}_{\Psi} \subset \mathbb{R}^2 \to \mathbb{R}^2$ be a continuous map.
\begin{itemize}
\item[-] We say that $(\mathcal{H},\Psi)$ stretches
$\mathcal{\widetilde A}$ to $\mathcal{\widetilde B}$ along the
paths and write
$$
(\mathcal{H},\Psi): \mathcal{\widetilde A} \, \stretchx \,
\mathcal{\widetilde B}
$$
if $\mathcal{H} \subset \mathcal{A} \cap \mathcal{D}_{\Psi}$ is a compact subset and for every path $\gamma: [0,1] \to \mathcal{A}$ such that
$\gamma(0) \in \mathcal{A}^-_1$ and $\gamma(1) \in
\mathcal{A}^-_2$ (or $\gamma(0) \in \mathcal{A}^-_2$ and
$\gamma(1) \in \mathcal{A}^-_1$), there exists a subinterval
$[t',t''] \subset [0,1]$ such that for every $t \in [t',t'']$
$$
\gamma(t) \in \mathcal{H}, \qquad \Psi(\gamma(t)) \in \mathcal{B},
$$
and, moreover, $\Psi(\gamma(t'))$ and $\Psi(\gamma(t''))$ belong
to different components of $\mathcal{B}^-$.
\item[-] We say that $\Psi$ stretches $\mathcal{\widetilde A}$ to $\mathcal{\widetilde B}$ along the
paths with crossing number $M \geq 1$ and write
$$
\Psi: \mathcal{\widetilde A} \,\stretchx^M \,
\mathcal{\widetilde B}
$$
if there exist $M$ pairwise disjoint compact sets $\mathcal{H}_1,\ldots,\mathcal{H}_M \subset \mathcal{A} \cap \mathcal{D}_{\Psi}$
such that $(\mathcal{H}_i,\Psi): \mathcal{\widetilde A} \stretchx
\mathcal{\widetilde B}$ for $i=1,\ldots,M$.
\end{itemize}
\end{Definition}

As an easy consequence of the definition, we have that the stretching property has a good behavior with respect to compositions of maps.

\begin{proposition}\label{composizione}
Assume that $\Psi$ stretches $\mathcal{\widetilde A}$ to $\mathcal{\widetilde B}$
with crossing number $M$ and $\Phi$ stretches $\mathcal{\widetilde B}$ to $\mathcal{\widetilde C}$
with crossing number $N$. Then, the composition $\Phi \circ \Psi$ stretches $\mathcal{\widetilde A}$
to $\mathcal{\widetilde C}$ with crossing number $M \times N$.
\end{proposition}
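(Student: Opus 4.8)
The plan is to build the $MN$ stretching sets for $\Phi \circ \Psi$ directly from the data provided by the two hypotheses. By Definition \ref{stretchdef}, since $\Psi: \mathcal{\widetilde A} \stretchx^M \mathcal{\widetilde B}$ there are pairwise disjoint compact sets $\mathcal{H}_1, \dots, \mathcal{H}_M \subset \mathcal{A} \cap \mathcal{D}_\Psi$ with $(\mathcal{H}_i, \Psi): \mathcal{\widetilde A} \stretchx \mathcal{\widetilde B}$, and since $\Phi: \mathcal{\widetilde B} \stretchx^N \mathcal{\widetilde C}$ there are pairwise disjoint compact sets $\mathcal{K}_1, \dots, \mathcal{K}_N \subset \mathcal{B} \cap \mathcal{D}_\Phi$ with $(\mathcal{K}_j, \Phi): \mathcal{\widetilde B} \stretchx \mathcal{\widetilde C}$. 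I would then set, for $i = 1, \dots, M$ and $j = 1, \dots, N$,
$$
\mathcal{H}_{ij} = \mathcal{H}_i \cap \Psi^{-1}(\mathcal{K}_j),
$$
and claim that these $MN$ sets witness $\Phi \circ \Psi: \mathcal{\widetilde A} \stretchx^{MN} \mathcal{\widetilde C}$.

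First I would dispatch the bookkeeping. Each $\mathcal{H}_{ij}$ is compact, being the intersection of the compact set $\mathcal{H}_i$ with the closed set $\Psi^{-1}(\mathcal{K}_j)$ (here $\mathcal{K}_j$ is closed and $\Psi$ is continuous). On $\mathcal{H}_{ij}$ the map $\Psi$ takes values in $\mathcal{K}_j \subset \mathcal{D}_\Phi$, so $\Phi \circ \Psi$ is well defined there, i.e. $\mathcal{H}_{ij} \subset \mathcal{A} \cap \mathcal{D}_{\Phi \circ \Psi}$. Pairwise disjointness follows by cases: if $i \neq i'$ then $\mathcal{H}_i \cap \mathcal{H}_{i'} = \emptyset$ already forces $\mathcal{H}_{ij} \cap \mathcal{H}_{i'j'} = \emptyset$; if $i = i'$ but $j \neq j'$ then $\Psi^{-1}(\mathcal{K}_j) \cap \Psi^{-1}(\mathcal{K}_{j'}) = \Psi^{-1}(\mathcal{K}_j \cap \mathcal{K}_{j'}) = \emptyset$.

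The core of the argument is the verification of the path-stretching condition for each pair $(\mathcal{H}_{ij}, \Phi \circ \Psi)$. Given any admissible path $\gamma: [0,1] \to \mathcal{A}$ joining the two components of $\mathcal{A}^-$, the relation $(\mathcal{H}_i, \Psi): \mathcal{\widetilde A} \stretchx \mathcal{\widetilde B}$ produces a subinterval $[t', t''] \subset [0,1]$ with $\gamma(t) \in \mathcal{H}_i$ and $\Psi(\gamma(t)) \in \mathcal{B}$ for all $t \in [t', t'']$, and with $\Psi(\gamma(t'))$ and $\Psi(\gamma(t''))$ lying on different components of $\mathcal{B}^-$. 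I would then form the sub-path $\sigma = \Psi \circ \gamma|_{[t', t'']}$, reparametrized affinely onto $[0,1]$: it is continuous, has image in $\mathcal{B}$, and its endpoints lie on different components of $\mathcal{B}^-$, so it is an admissible path for the oriented rectangle $\mathcal{\widetilde B}$. Applying $(\mathcal{K}_j, \Phi): \mathcal{\widetilde B} \stretchx \mathcal{\widetilde C}$ to $\sigma$ yields a further subinterval which, pulled back, is some $[s', s''] \subset [t', t'']$ such that $\Psi(\gamma(s)) \in \mathcal{K}_j$ and $\Phi(\Psi(\gamma(s))) \in \mathcal{C}$ for $s \in [s', s'']$, with $\Phi(\Psi(\gamma(s')))$ and $\Phi(\Psi(\gamma(s'')))$ on different components of $\mathcal{C}^-$.

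Since $[s', s''] \subset [t', t'']$ we still have $\gamma(s) \in \mathcal{H}_i$, while $\Psi(\gamma(s)) \in \mathcal{K}_j$ gives $\gamma(s) \in \Psi^{-1}(\mathcal{K}_j)$, whence $\gamma(s) \in \mathcal{H}_{ij}$ throughout $[s', s'']$. Together with $(\Phi \circ \Psi)(\gamma(s)) = \Phi(\Psi(\gamma(s))) \in \mathcal{C}$ and the endpoint condition on $\mathcal{C}^-$, this is exactly the requirement of Definition \ref{stretchdef} for $(\mathcal{H}_{ij}, \Phi \circ \Psi): \mathcal{\widetilde A} \stretchx \mathcal{\widetilde C}$, completing the proof. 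The only points needing care are the nesting $[s', s''] \subset [t', t''] \subset [0,1]$, so that all conditions hold simultaneously on the innermost interval, and the observation that $\sigma$ really is an admissible path for the second relation; I do not expect any serious obstacle here, since both follow directly from the definitions and from the continuity of $\Psi$. This is essentially a bookkeeping argument, so the whole difficulty is organizational rather than substantive.
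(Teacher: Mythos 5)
Your proof is correct and coincides with the argument the paper intends: Proposition \ref{composizione} is stated there without proof, as ``an easy consequence of the definition,'' and your witnessing sets $\mathcal{H}_{ij}=\mathcal{H}_i\cap\Psi^{-1}(\mathcal{K}_j)$ together with the two successive restrictions of the path (first via $(\mathcal{H}_i,\Psi)$, then via $(\mathcal{K}_j,\Phi)$ applied to the reparametrized image path) are exactly the standard way that consequence is realized in the SAP literature. One cosmetic remark: since $\mathcal{D}_{\Psi}$ need not be closed, $\Psi^{-1}(\mathcal{K}_j)$ is a priori closed only relative to $\mathcal{D}_{\Psi}$, so compactness of $\mathcal{H}_{ij}$ is cleanest seen by viewing it as the preimage of $\mathcal{K}_j$ under the continuous restriction $\Psi|_{\mathcal{H}_i}$, closed in the compact set $\mathcal{H}_i$ --- this is implicit in your argument and causes no gap.
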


We finally state the result which is employed in our paper.

\begin{Theorem}\label{thchaos}
Assume that there are double sequences of oriented rectangles
$\left(\mathcal{\widetilde P}_k \right)_{k \in \mathbb{Z}}$ and of maps
$\left(\phi_k \right)_{k \in \mathbb{Z}}$ such that $\phi_k$ stretches
$\mathcal{\widetilde P}_k$ to $\mathcal{\widetilde P}_{k+1}$ with crossing number $M_k \geq 1$,
for all $k \in \mathbb{Z}$.
Let $\mathcal{H}_{k,j} \subset \mathcal{P}_{k}$, with $j=1,\ldots,M_k$, be the compact sets according to the definition of multiple stretching.
Then, the following conclusion hold:
\begin{itemize}
\item for every sequence $(s_k)_{k \in \mathbb{Z}}$, with $s_k \in \{1,\ldots,M_k\}$, there exists a sequence $(w_k)_{k \in \mathbb{Z}}$
with $w_k \in \mathcal{H}_{k,s_k}$ and $\phi_k(w_k) = w_{k+1}$ for all $k \in \mathbb{Z}$;
\item if there exists $h,k \in \mathbb{Z}$ with $h <k$ such that $\mathcal{\widetilde P}_h = \mathcal{\widetilde P}_k$,
then there is a finite sequence $(w_i)_{h \leq i \leq k}$ with $w_i \in \mathcal{H}_{i,s_i}$,
$\phi_i(w_i) = w_{i+1}$ for $i=h,\ldots,k-1$ and $w_h = w_k$, that is, $w_h$ is a fixed point
of $\phi_{k-1} \circ \ldots \circ \phi_h$.
\end{itemize}
\end{Theorem}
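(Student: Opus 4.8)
The plan is to reduce everything to the one-step content of Definition \ref{stretchdef} together with the composition rule (Proposition \ref{composizione}), and then to treat the two conclusions separately: the bi-infinite orbit by a compactness/diagonal argument, and the periodic orbit by a planar fixed-point argument. First I would record the elementary pullback consequence of the stretching definition: if $\gamma$ is a path crossing $\widetilde{P}_k$ (endpoints on the two distinct components of $\mathcal{P}_k^-$), then, having fixed a symbol $s_k$, there is a subpath $\sigma$ of $\gamma$ with image in $\mathcal{H}_{k,s_k}$ such that $\phi_k\circ\sigma$ is again a path crossing $\widetilde{P}_{k+1}$. Iterating this and bookkeeping the nested parameter subintervals, for any finite window $m\le i\le n$ and any crossing path of $\widetilde{P}_m$ one extracts a point $w_m$ and an orbit segment $w_m,\,\phi_m(w_m)=w_{m+1},\dots,w_n$ with $w_i\in\mathcal{H}_{i,s_i}$ for $m\le i\le n-1$. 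Since each $\mathcal{P}_m$ is a topological rectangle, a crossing path joining the two arcs of $\mathcal{P}_m^-$ always exists, so this finite version is unconditional.

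For the first conclusion I would apply the finite version on the symmetric windows $[-n,n]$, producing for each $n$ an orbit segment $(w^{(n)}_i)_{-n\le i\le n}$ with $w^{(n)}_i\in\mathcal{H}_{i,s_i}$ and $\phi_i(w^{(n)}_i)=w^{(n)}_{i+1}$. Every one of these points lives in a fixed compact set, so a diagonal extraction gives a subsequence along which $w^{(n)}_i\to w_i$ for all fixed $i$ simultaneously. Closedness of $\mathcal{H}_{i,s_i}$ yields $w_i\in\mathcal{H}_{i,s_i}$, and continuity of each $\phi_i$ yields $\phi_i(w_i)=w_{i+1}$, which is precisely the required bi-infinite orbit. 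The point of routing the argument through images rather than preimages is that it never inverts the $\phi_i$, which matters since these maps are only assumed continuous.

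For the second conclusion, with $\widetilde{P}_h=\widetilde{P}_k$, Proposition \ref{composizione} shows that $\Phi=\phi_{k-1}\circ\cdots\circ\phi_h$ stretches $\widetilde{P}_h$ to itself. The crux is the fixed-point property of self-stretching maps: whenever $(\mathcal{K},\Psi):\widetilde{A}\stretchx\widetilde{A}$, the map $\Psi$ has a fixed point in $\mathcal{K}$. I would prove this by transporting $\widetilde{A}$ to the unit square with $\mathcal{A}^-$ realized as the two vertical sides, and then combining the stretching of the horizontal fibers with a Poincar\'e--Miranda / planar connectedness argument (a continuum of ``first-coordinate-fixed'' points crossing the square top-to-bottom must meet a left--right crossing continuum produced by the stretching), in the spirit of the crossing lemmas already invoked in the paper (e.g.\ \cite{RebZan00,MulWil74}). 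Applying this to $\Phi$ and tracking the orbit of the fixed point through the intermediate sets exactly as in the finite version, I obtain $w_h\in\mathcal{H}_{h,s_h}$ with $w_i\in\mathcal{H}_{i,s_i}$, $\phi_i(w_i)=w_{i+1}$ and $w_h=w_k$.

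The main obstacle is this fixed-point lemma. The compactness and diagonal steps are essentially routine once the pullback lemma is in place, and the bi-infinite orbit by itself does \emph{not} produce a periodic point even for periodic data. Establishing a genuine fixed point for a merely continuous self-stretching map genuinely requires two-dimensional topology (degree theory or Poincar\'e--Miranda), and one must take care to localize it inside the prescribed compact set so that the intermediate itinerary $w_i\in\mathcal{H}_{i,s_i}$ is respected rather than merely the return condition $w_h=w_k$.
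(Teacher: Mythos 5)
Your proposal is correct in outline, but it takes a genuinely different route from the paper: the paper's entire proof of Theorem \ref{thchaos} consists of observing that the hypothesis says exactly $(\mathcal{H}_{k,s_k},\phi_k):\mathcal{\widetilde P}_k \,\stretchx\, \mathcal{\widetilde P}_{k+1}$ for every $k$, and then citing \cite[Theorem 2.2]{PapZan04}. What you wrote is essentially a reconstruction of the proof of that cited result. Your treatment of the first conclusion (pullback of crossing subpaths with nested parameter intervals, finite windows $[-n,n]$, then a diagonal extraction using compactness of the sets $\mathcal{H}_{i,s_i}$ and continuity of the maps $\phi_i$, never inverting any $\phi_i$) is complete and is the standard argument. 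For the second conclusion, the reduction via Proposition \ref{composizione} to a self-stretching map $\Phi=\phi_{k-1}\circ\cdots\circ\phi_h$, with the composite compact set chosen so as to record the itinerary $w_i\in\mathcal{H}_{i,s_i}$, is also the right move, and you correctly identify the fixed-point property of self-stretching pairs as the genuine content: that is precisely the SAP fixed point theorem of \cite{PapZan04,PapZan04b}, i.e.\ exactly what the paper's citation buys without proof.

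One repair is needed in your sketch of that key lemma. After normalizing $\mathcal{R}=[0,1]^2$ with $\mathcal{R}^-$ the two vertical sides, the assembly you propose --- a top-to-bottom continuum of first-coordinate-fixed points \emph{meeting} a left-right crossing continuum produced by the stretching --- does not finish the proof: at the meeting point only the first coordinate is fixed, and nothing there controls $\pi_2(\Psi(z))-\pi_2(z)$. The correct assembly is sequential rather than an intersection of two continua: (i) along the stretched subpath of any left-right crossing path, the image runs from one vertical side to the other while the point stays in the square, so the intermediate value theorem gives a point of the compact set $F=\{z\in\mathcal{K}:\Psi(z)\in\mathcal{R},\ \pi_1(\Psi(z))=\pi_1(z)\}$ on every crossing path; (ii) a Whyburn-type separation lemma --- the same tool as \cite[Corollary 6]{RebZan00}, which the paper already uses in Proposition \ref{waze} --- then forces $F$ to contain a continuum joining the bottom and top sides; (iii) along that continuum the continuous function $z\mapsto \pi_2(\Psi(z))-\pi_2(z)$ is $\geq 0$ at the bottom endpoint and $\leq 0$ at the top one (since $\Psi(z)\in\mathcal{R}$ on $F$), so connectedness yields a point where both coordinates are fixed, and by construction it lies in $\mathcal{K}$, which preserves the prescribed itinerary. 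With this correction your argument is sound; what it buys over the paper's one-line proof is self-containedness, and it makes visible that two-dimensional topology enters only in the periodic conclusion, the bi-infinite orbit requiring nothing beyond compactness --- a distinction you rightly emphasize.
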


\begin{proof}
By assumption, we have that
$$
\left(\mathcal{H}_{k,s_k},\phi_k \right) : \mathcal{\widetilde P}_{k}\,\stretchx \, \mathcal{\widetilde P}_{k+1}, \quad \mbox{ for every } k \in \mathbb{Z}.
$$
The conclusion follows then from \cite[Theorem 2.2]{PapZan04}.
\end{proof}

\begin{remark}\label{coniugio}
\textnormal{In the setting of Theorem \ref{thchaos}, when
$\mathcal{\widetilde P}_k = \widetilde{\mathcal{P}}$, $\phi_k = \phi$ and
$M_k = M$ for all $k \in \mathbb{Z}$, it is possible to prove
that there is a compact invariant set $\Lambda \subset \mathcal{P}$
such that the map $\phi|_{\Lambda}$ is topologically semiconjugate on
the Bernoulli shift on $M$ symbols (see \cite[Lemma 2.3]{PasPirZan08}).}
\end{remark}

\end{document}